\def\journal@id{~}
\def\journal@name{~}
\def\journal@url{~}
\newtheorem{theorem}{Theorem}[section]
\newtheorem{lemma}[theorem]{Lemma}
\newtheorem{proposition}[theorem]{Proposition}
\theoremstyle{definition}
\newtheorem{definition}[theorem]{Definition}
\theoremstyle{remark}
\newtheorem{remark}[theorem]{Remark}
\newtheorem{example}[theorem]{Example} 
\numberwithin{equation}{section}
\def\R{{\mathbb R}}
\def\N{{\mathbb N}}
\def\1{{\mathbbm 1}}
\def\E{{\mathbb E}}
\def\P{{\mathbb P}}
\def\ra{\rightarrow}
\renewcommand{\|}{\Vert}
\def\sphn{\mathbb{S}^{n-1}}
\def\seq{\mathbb{S}}
\def\onen{\iota^{(n)}}
\def\thn{\theta^{(n)}}
\def\ir{\mathbb{I}}
\begin{document}

\begin{frontmatter}

\title{Cram\'er's theorem is atypical}
\runtitle{Cram\'er's theorem is atypical}

\begin{aug}
  \author{\fnms{Nina}  \snm{Gantert} \thanksref{t1} \ead[label=e1]{gantert@ma.tum.de}},
  \author{\fnms{Steven Soojin} \snm{Kim} \thanksref{t2,t4} \ead[label=e2]{steven\_kim@brown.edu}},
  \and
  \author{\fnms{Kavita}  \snm{Ramanan} \thanksref{t1,t3,t4} \ead[label=e3]{kavita\_ramanan@brown.edu}}

  \thankstext{t1}{NG and KR would like to thank ICERM, Providence, for an invitation to the program ``Computational Challenges in Probability", where some of this work was initiated.}
  \thankstext{t2}{SSK was partially supported by a Department of Defense NDSEG fellowship.}  
  \thankstext{t3}{KR was partially supported by ARO grant W911NF-12-1-0222 and NSF grant DMS 1407504.}
  \thankstext{t4}{SSK and KR would also like to thank Microsoft Research New England for their hospitality during the Fall of 2014, when some of this work was completed.}

  \runauthor{Nina Gantert, Steven Soojin Kim, and Kavita Ramanan}

  \affiliation{Technische Universit\"at M\"unchen, Brown University, and Brown University}

  \address{Fakult\"at f\"ur Mathematik,\\
           Technische Universit\"at M\"unchen\\
          \printead{e1}}

  \address{Division of Applied Mathematics,\\
          Brown University\\
          \printead{e2,e3}}
\end{aug}

\begin{abstract} 
The empirical mean of $n$ independent and identically distributed
(i.i.d.) random variables $(X_1,\dots,X_n)$ can be viewed as a suitably normalized scalar projection of the $n$-dimensional random vector $X^{(n)}\doteq(X_1,\dots,X_n)$ in the direction of the unit
vector $n^{-1/2}(1,1,\dots,1) \in \sphn$. The large deviation
principle (LDP) for such projections as $n\rightarrow\infty$ is given
by the classical Cram\'er's theorem. We prove an LDP for
the 
sequence of normalized scalar projections of $X^{(n)}$ in the direction of a generic unit vector $\thn \in \sphn$, as $n\ra\infty$. This
LDP holds under fairly general conditions on the distribution of $X_1$,
and for ``almost every" sequence of directions $(\thn)_{n\in\N}$. The
associated rate function is ``universal'' in the sense 
that it  does not depend on the particular sequence of directions. 
Moreover, under mild additional conditions on the law of $X_1$, we
show that the universal rate function differs from the Cram\'er rate
function, thus showing that the sequence of directions
$n^{-1/2}(1,1,\dots,1) \in \sphn,$ $n \in \mathbb{N}$, 
corresponding to Cram\'er's theorem is atypical.
\end{abstract}

\begin{keyword}[class=MSC]
\kwd[Primary ]{60F10} 
\kwd[; secondary ]{60D05.} 
\end{keyword}

\begin{keyword}
\kwd{large deviations}
\kwd{projections}
\kwd{high-dimensional product measures}
\kwd{Cram\'er's theorem}
\kwd{rate function.}
\end{keyword}

\end{frontmatter}

\section{Introduction}\label{sec-intro}

Let $X^{(n)} = (X_1,\dots, X_n)$ be a sequence of $n$ independent and identically distributed (i.i.d.) $\R$-valued random variables with common distribution $\gamma \in \mathcal{P}(\R)$. A fundamental probabilistic question is how the empirical mean of $X^{(n)}$ behaves as the length of the sequence $n$ increases. From a geometric perspective, the empirical mean is a suitably normalized version of (the scalar component of) the projection of the $n$-dimensional vector $X^{(n)}$ in the direction of the unit vector $\onen$, defined by
\begin{equation}\label{onedefn}
  \onen \doteq \tfrac{1}{\sqrt{n}}(\overbracket{1,1,\dots, 1}^{n \text{ times }}) \in \sphn.
\end{equation}
In other words, we can write 
\begin{equation}
  W_\onen \doteq \frac{1}{\sqrt{n}} \langle X^{(n)}, \onen \rangle_n = \frac{1}{n}\sum_{i=1}^n X_i \, , \label{meanproj}
\end{equation}
where $\langle \cdot, \cdot\rangle_n$ denotes the Euclidean inner product. With some abuse of terminology, for $x\in \R^n$ and $v\in \sphn$, we hereby write the ``\emph{projection} of $x$ in the direction $v$" to refer to the scalar component $\langle x,v\rangle_n \in \R$ (rather than the vector $\langle x,v\rangle_nv\in \R^n$). Then, the expression \eqref{meanproj} indicates that questions on the empirical mean for large $n$ can be rephrased in a geometric language as questions on suitably normalized projections of high-dimensional random vectors. 

The classical Cram\'er's theorem characterizes the large deviations behavior of \eqref{meanproj}, the empirical mean of i.i.d.\ random variables, as $n\rightarrow\infty$. In particular, if $X_1\sim \gamma$ has some finite exponential moments, in the sense that 
\begin{equation}\label{weakexpon}
\exists\, t_0>0 \text{ s.t. } \forall\, |t|< t_0, \quad  \Lambda(t) \doteq \log \E[e^{tX_1}] < \infty,
\end{equation}
then we have the limit
\begin{equation*}
  \lim_{n\ra\infty} \frac{1}{n}\log \P(W_\onen \ge x) = -\Lambda^*(x),
\end{equation*}
where $^*$ denotes the \emph{Legendre transform}, 
\begin{equation}\label{legendef}
\Lambda^*(x) \doteq \sup_{t\in \R} \{ tx - \Lambda(t)\}. 
\end{equation}
We refer to \cite[\S 12]{rockafellar1970convex} for a review of the Legendre transform (also known as the \emph{convex conjugate}).

Given the geometric view of empirical means given by \eqref{meanproj}, it is natural to investigate analogs of Cram\'er's theorem for normalized projections in directions $\thn\in\sphn$ other than $\onen$. Such projections correspond to \emph{weighted} means,
\begin{equation}
W_{\theta}^{(n)}\doteq \frac{1}{\sqrt{n}} \langle X^{(n)}, \thn \rangle_n =  \frac{1}{n}\sum_{i=1}^n X_i \sqrt{n} \thn_i . \label{weightproj}
\end{equation}
Our main result is an LDP for $(W_{\theta}^{(n)})_{n\in\N}$ for almost every (in a sense that is specified below) sequence of directions $\theta = (\theta^{(1)},\theta^{(2)},\dots)$. In particular, we show that the associated rate function does not depend on $\theta$, and that it differs from the Cram\'er rate function $\Lambda^*$. That is, the sequence of directions $(\onen)_{n\in\N}$ corresponding to Cram\'er's theorem is ``atypical"!

\begin{remark}
While the LDP for \eqref{weightproj} is novel, the corresponding law of large numbers (LLN) and central limit theorem (CLT) for weighted sums are well known. For example, a weak LLN follows from Chebyshev's inequality, and a CLT follows from the Lindeberg conditions (see, e.g., \cite[\S VIII.4, Theorem 3]{feller1970probability2}).
\end{remark}

The outline of this note is as follows. In Section \ref{sec-main}, we
state our main results and discuss their relation to prior work. In Section
\ref{sec-sur}, we prove the claimed LDP. In Section \ref{sec-atyp}, we
establish that Cram\'er's theorem is atypical, and also comment on 
a generalization that is considered in \cite{gkr2}.

\section{Main results}\label{sec-main}

We first set some notation. Suppose the random variables $X_1,X_2,\dots$ are all defined on a common probability space $(\Omega,\mathcal{F},\P)$. Let $\|\cdot\|_{n}$ denote the Euclidean norm on $\R^n$. Write  $\sigma_{n-1}$ for the unique rotation invariant probability measure on $\sphn$, the unit sphere in $\R^n$. Let $\seq \doteq \prod_{n \in \N} \sphn$, and let $\pi_n:\seq\ra \sphn$ be the coordinate map such that for $\theta = (\theta^{(1)},\theta^{(2)},\dots)\in \seq$, we have $\pi_n(\theta) = \thn$. Let $\sigma$ be a probability measure on (the Borel sets of) $\seq$ such that
\begin{equation}\label{sigproj}\tag{H1}
\sigma \circ \pi_n^{-1} = \sigma_{n-1}, \quad n\in \N.
\end{equation}
The generic example to keep in mind that satisfies \eqref{sigproj}  is the product measure $\sigma=\bigotimes_{n\in \N} \sigma_{n-1}$, in which case the projection directions $\thn$, $n\in\N$, are independent under $\sigma$. However, our results allow for more general dependencies; for more discussion on $\sigma$ and the condition \eqref{sigproj}, see Remark \ref{rmk-sigma}.

For $\sigma$-a.e.\ $\theta\in\sphn$, we prove a large deviation principle for the sequence $(W_\theta^{(n)})_{n\in \N}$ with a rate function that does not depend on $\theta$. We refer to \cite{DemZeibook} for general background on large deviations. In particular, recall the following definition:

\begin{definition}
The sequence of probability measures $(\mu_n)_{n\in \N}\subset\mathcal{P}(\R)$ is said to satisfy a \emph{large deviation principle (LDP)} with a \emph{rate function} $\ir:\R\ra[0,\infty]$ if $\ir$ is lower semicontinuous, and for all Borel measurable sets $\Gamma\subset \R$, 
\begin{equation*}
  -\inf_{x\in \Gamma^\circ} \ir(x) \le  \liminf_{n\ra\infty} \tfrac{1}{n} \log \mu_n(\Gamma^\circ) \le \limsup_{n\ra\infty} \tfrac{1}{n} \log \mu_n(\bar \Gamma) \le -\inf_{x\in \bar{\Gamma}} \ir(x),
\end{equation*}
where $\Gamma^\circ$ and $\bar\Gamma$ denote the interior and closure of $\Gamma$, respectively. Furthermore, $\ir$ is said to be a \emph{good rate function} if it has compact level sets.

We say the sequence of $\R$-valued random variables $(\xi_n)_{n\in \N}$ satisfies an LDP if the sequence of laws $(\mu_n)_{n\in\N}$ given by $\mu_n = \P \circ \xi_n^{-1}$ satisfies an LDP.
\end{definition}

In particular, for empirical means of i.i.d.\ random variables, we recall the following classical result, due to \cite{cramer38sur, chernoff52measure}.

\begin{theorem}[Cram\'er]
Let $(X_n)_{n\in\N}$ be an i.i.d.\ sequence such that \eqref{weakexpon} holds, and let $\iota=(\iota^{(1)},\iota^{(2)},\dots)$ be defined as in \eqref{onedefn}. Then the sequence $(W_\onen)_{n\in \N}$ of \eqref{meanproj} satisfies an LDP with the good rate function $\ir_\iota$, given by
\begin{equation}\label{iriodef}
  \ir_\iota(w) \doteq \Lambda^*(w) = \sup_{t\in\R} \{ tw - \Lambda(t)\}.
\end{equation}
\end{theorem}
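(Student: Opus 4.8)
The plan is to follow the classical route to Cram\'er's theorem in $\R$: first record the convex-analytic properties of the candidate rate function $\ir_\iota=\Lambda^*$, then establish the large deviation upper bound on closed sets via the exponential Chebyshev inequality, and finally the lower bound on open sets via an exponential change of measure. \emph{Properties of $\ir_\iota=\Lambda^*$.} Condition \eqref{weakexpon} together with dominated convergence implies that $\Lambda$ is finite and smooth on $(-t_0,t_0)$ with $\Lambda(0)=0$, while H\"older's inequality shows $\Lambda$ is convex on all of $\R$; in particular $\E[X_1]=\Lambda'(0)$ is finite. Being a supremum of affine functions, $\Lambda^*$ is convex and lower semicontinuous, it is nonnegative (take $t=0$ in \eqref{legendef}), and $\Lambda^*(\E[X_1])=0$ by Jensen's inequality. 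For $0<|t|<t_0$ one has $\Lambda^*(w)\ge tw-\Lambda(t)$, so $\Lambda^*(w)\to\infty$ as $|w|\to\infty$; combined with lower semicontinuity, this forces the sublevel sets of $\Lambda^*$ to be compact, so $\ir_\iota$ is a good rate function.

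\emph{Upper bound.} For $w\ge\E[X_1]$ and every $t\ge0$, applying Markov's inequality to $e^{t\sum_i X_i}$ gives $\P(W_\onen\ge w)\le e^{-n(tw-\Lambda(t))}$; since the supremum defining $\Lambda^*(w)$ is then attained over $t\ge0$, optimizing yields $\limsup_n\tfrac1n\log\P(W_\onen\ge w)\le-\Lambda^*(w)$, and symmetrically $\limsup_n\tfrac1n\log\P(W_\onen\le w)\le-\Lambda^*(w)$ for $w\le\E[X_1]$. For a general closed set $F$: if $\E[X_1]\in F$ then $\inf_F\ir_\iota=0$ and the bound is trivial; otherwise, letting $(a,b)$ be the connected component of $F^c$ containing $\E[X_1]$, monotonicity of $\Lambda^*$ on either side of its minimizer gives $\inf_F\ir_\iota=\ir_\iota(a)\wedge\ir_\iota(b)$ (with $\ir_\iota(\pm\infty)=+\infty$), and the claim follows from $\P(W_\onen\in F)\le\P(W_\onen\le a)+\P(W_\onen\ge b)$ together with the principle of the largest term.

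\emph{Lower bound.} It suffices to show that for every $w$ with $\ir_\iota(w)<\infty$ and every $\delta>0$, $\liminf_n\tfrac1n\log\P(|W_\onen-w|<\delta)\ge-\ir_\iota(w)$. The principal case is when there is a $\tau$ in the interior of $\{\Lambda<\infty\}$ with $\Lambda'(\tau)=w$, so that $\ir_\iota(w)=\tau w-\Lambda(\tau)$: introduce the tilted law $\tilde\gamma$ with $d\tilde\gamma/d\gamma(y)=e^{\tau y-\Lambda(\tau)}$, under which the $X_i$ are i.i.d.\ with mean $w$, and write
\begin{equation*}
\P(|W_\onen-w|<\delta)=\E_{\tilde\gamma^{\otimes n}}\!\Big[e^{-\tau\sum_i X_i+n\Lambda(\tau)}\,\1_{\{|W_\onen-w|<\delta\}}\Big]\ge e^{-n(\tau w+|\tau|\delta-\Lambda(\tau))}\,\tilde\gamma^{\otimes n}\big(|W_\onen-w|<\delta\big).
\end{equation*}
The last factor tends to $1$ by the weak law of large numbers under $\tilde\gamma^{\otimes n}$, so letting $n\to\infty$ and then $\delta\downarrow0$ proves the claim. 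The residual values of $w$ with $\ir_\iota(w)<\infty$ — namely the essential infimum or supremum of $\gamma$ when finite, and interior points not in the range of $\Lambda'$ — are treated directly: e.g.\ if $w=b^*\doteq\mathrm{ess\,sup}\,\gamma<\infty$, then $\P(|W_\onen-w|<\delta)\ge\P(X_1>b^*-\delta)^n$, whence $\liminf_n\tfrac1n\log\P(|W_\onen-w|<\delta)\ge\log\P(X_1>b^*-\delta)\to\log\P(X_1=b^*)=-\ir_\iota(b^*)$ as $\delta\downarrow0$, and likewise at the essential infimum; interior non-steep points are handled by approximating $w$ by nearby points in the range of $\Lambda'$. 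Combining the two bounds over open and closed sets, together with the compactness of the sublevel sets of $\ir_\iota$, yields the LDP with good rate function $\ir_\iota$.

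The step I expect to be the main obstacle is the lower bound, and within it the case analysis ensuring that \emph{every} $w$ with $\ir_\iota(w)<\infty$ is covered: one must verify that when $w$ lies strictly between the essential infimum and supremum of $\gamma$ the smoothness of $\Lambda$ and the behaviour of $\Lambda'$ at the boundary of its effective domain produce a tilting parameter realizing the supremum in $\Lambda^*(w)$ (or, in the non-steep case, a suitable approximation argument), and that the boundary values of the support of $\gamma$ are caught by the elementary estimate above.
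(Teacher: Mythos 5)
The paper does not actually prove this theorem: it is quoted as the classical result of Cram\'er and Chernoff, with a citation in place of a proof. Your proposal is the standard textbook argument (exponential Chebyshev for the upper bound on half-lines, reduction of closed sets to two half-lines via convexity of $\Lambda^*$, and an exponential change of measure plus the weak law for the lower bound on balls), and most of it is sound: the goodness of $\ir_\iota=\Lambda^*$ under \eqref{weakexpon}, the upper bound, the tilting argument when $w=\Lambda'(\tau)$ for some $\tau$ in the interior of $D_\Lambda$, and the boundary cases $w=\mathrm{ess\,inf}\,\gamma$ or $w=\mathrm{ess\,sup}\,\gamma$ are all handled correctly.

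There is, however, a genuine gap at exactly the spot you flagged: the lower bound at interior points $w$ with $\ir_\iota(w)<\infty$ that are not in the range of $\Lambda'$. Condition \eqref{weakexpon} only guarantees that $D_\Lambda=\{\Lambda<\infty\}$ contains a neighbourhood of $0$, so $\Lambda$ need not be steep. For instance, if $\gamma(dx)\propto(1+x^4)^{-1}e^{-|x|}\,dx$, then $D_\Lambda=[-1,1]$, $w_1\doteq\lim_{t\uparrow 1}\Lambda'(t)<\infty$ while $\mathrm{ess\,sup}\,\gamma=+\infty$, and $\Lambda^*(w)=w-\Lambda(1)<\infty$ for every $w\ge w_1$; the lower bound must therefore be proved at all such $w$. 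Your proposed repair --- ``approximating $w$ by nearby points in the range of $\Lambda'$'' --- does not work there: the range of $\Lambda'$ is the bounded interval $(\Lambda'(-1^+),w_1)$, so for $w$ far above $w_1$ there are simply no points of that range within $\delta$ of $w$, and a bound for a ball around some $w'\le w_1$ gives no information about $\P(|W_\onen-w|<\delta)$. Tilting at the boundary parameter $t=1$ also fails, since the tilted law has mean $w_1<w$ and the law-of-large-numbers step collapses. The standard way to close this case (as in Dembo--Zeitouni, Lemma 2.2.5) is truncation: restrict to the event $\{|X_i|\le M\ \text{for all}\ i\}$, apply the tilting argument to the conditioned law (whose log mgf is finite everywhere, hence steep enough that the supremum is attained), and then let $M\to\infty$, using monotone convergence to show the resulting exponent converges to $-\Lambda^*(w)$. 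With that argument supplied, the proof is complete.
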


Let $\nu\in\mathcal{P}(\R)$ denote the standard one-dimensional Gaussian measure. In the sequel, we assume the following condition on $\Lambda$, the logarithmic moment generating function (log mgf) of $X_1\sim \gamma$:
\begin{equation}\label{strongexpon}
\forall \,t\in \R, \quad \int_\R |\Lambda(tu)|^4\nu(du) < \infty. \tag{H2}
\end{equation}
Note that \eqref{strongexpon} is stronger than even requiring the exponential moment condition in \eqref{weakexpon} to hold with $t_0=\infty$.  For an absolutely continuous $\gamma$ with density, a sufficient condition for \eqref{strongexpon} is that the decay of the tail of the density is strictly faster than exponential, in the following sense:

\begin{lemma}  \label{lem-tail}
Suppose that $\gamma$ has density $f$, and that there exist $p\in(1,\infty)$ and constants $0 < C_1, C_2, C_3 < \infty$ such that for $|x| > C_1$, we have
\begin{equation*}
  f(x) \le C_2 e^{-C_3|x|^p}.
\end{equation*}
Then there exists some constant $C< \infty$ such that $\Lambda$, the log mgf of $\gamma$, satisfies the following upper bound for all $t\in \R$:
\begin{equation*}
  \Lambda(t) \le C|t|^{p/(p-1)} +C.
\end{equation*}
Moreover, this implies that $\Lambda$ satisfies the condition \eqref{strongexpon}.
\end{lemma}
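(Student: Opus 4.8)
The plan is to first establish the pointwise bound on $\Lambda$ directly from the tail hypothesis on the density $f$, and then to deduce \eqref{strongexpon} from that bound via a crude polynomial estimate together with the finiteness of all Gaussian moments. Throughout, set $q \doteq p/(p-1) \in (1,\infty)$, the conjugate exponent of $p$.

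For the bound on $\Lambda$, I would split $\E[e^{tX_1}] = \int_\R e^{tx} f(x)\,dx$ according to whether $|x|\le C_1$ or $|x| > C_1$. The first region contributes at most $e^{C_1|t|}$. On the second region, I would use the hypothesis $f(x)\le C_2 e^{-C_3|x|^p}$ together with the elementary estimate $tx - \tfrac{C_3}{2}|x|^p \le c\,|t|^q$, valid for all $x,t\in\R$ with a finite constant $c=c(C_3,p)$ (this is Young's inequality $|t||x| \le \tfrac{C_3}{2}|x|^p + c|t|^q$ for the conjugate exponents $p,q$, after bounding $tx \le |t||x|$). Writing $tx - C_3|x|^p = \bigl(tx - \tfrac{C_3}{2}|x|^p\bigr) - \tfrac{C_3}{2}|x|^p$ then gives
\begin{equation*}
 \int_{|x|>C_1} e^{tx}f(x)\,dx \le C_2\, e^{c|t|^q}\int_\R e^{-\frac{C_3}{2}|x|^p}\,dx,
\end{equation*}
and the remaining integral is a finite constant since $p>0$. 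Adding the two contributions, taking logarithms, and absorbing the linear term $C_1|t|$ into $|t|^q$ (legitimate since $q>1$, so $|t|\le 1+|t|^q$) yields $\Lambda(t)\le C|t|^q + C$ for a suitable finite $C$, which is the first claim.

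To obtain \eqref{strongexpon}, I would complement this upper bound with the lower bound $\Lambda(s)\ge s\,\E[X_1]$, which follows from Jensen's inequality (and $\E[X_1]$ is finite because $\Lambda$ is finite in a neighborhood of the origin). Hence $|\Lambda(s)|\le C'(1+|s|^q)$ for a suitable finite $C'$, and therefore, for each fixed $t$, $|\Lambda(tu)|^4 \le C''(t)\,(1+|u|^{4q})$. Since $\nu$ is the standard Gaussian measure, it has finite moments of every order, so integrating this bound against $\nu(du)$ gives $\int_\R |\Lambda(tu)|^4\,\nu(du)<\infty$ for all $t\in\R$, which is precisely \eqref{strongexpon}.

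I do not expect a serious obstacle; the only points requiring a little care are the bookkeeping of constants and the role of the hypothesis $p>1$. The assumption $p>1$ is exactly what makes $q=p/(p-1)$ finite, so that $\Lambda$ has only polynomial growth of finite order $q$ and its fourth power is still integrable against $\nu$; and the fact that $q>1$ is what allows the polynomial term $|t|^q$ to dominate the linear contribution of the bounded-support part of the integral.
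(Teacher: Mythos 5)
Your proof is correct and follows essentially the same route as the paper: split the integral at $|x|=C_1$, apply Young's inequality with conjugate exponents $p$ and $q=p/(p-1)$ to absorb $tx$ into half of the Gaussian-type tail, and conclude \eqref{strongexpon} from the polynomial growth of $\Lambda$ together with the finiteness of all Gaussian moments. Your treatment is in fact slightly more careful than the paper's at the last step, since \eqref{strongexpon} involves $|\Lambda(tu)|^4$ and you supply the needed lower bound $\Lambda(s)\ge s\,\E[X_1]$ via Jensen, a point the paper leaves implicit.
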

\begin{proof}
By Young's inequality applied to  the conjugate exponents $p$ and $\tfrac{p}{p-1}$, for $\epsilon > 0$ and $t,y\in \R$,
\begin{equation*}
  ty \le \left( \epsilon^{-1/p}|t|\right) \left(\epsilon^{1/p} |y|\right) \le \tfrac{p-1}{p} \epsilon^{-1/(p-1)}|t|^{p/(p-1)}  + \frac{\epsilon |y|^p}{ p}.
\end{equation*}
In the following, let $C$ absorb all constants, and let $0 < \epsilon < C_3p$ to find that for $t\in \R$,
\begin{align*}
\Lambda(t) &= \log \int_{|y|\le C_1} e^{ty} f(y) dy + \log \int_{|y| > C_1} e^{ty} f(y) dy\\
   &\le C_1|t| + \log C_2 + \log \int_{\R} e^{ty} e^{-C_3 |y|^p} dy\\
   &\le C_1|t|^{p/(p-1)} + C_1 + \tfrac{p-1}{p} \epsilon^{-1/(p-1)}|t|^{p/(p-1)}  -\tfrac{1}{p}\log(C_3p-\epsilon) + 1\\
   &= C |t|^{p/(p-1)} + C.
\end{align*}
From the preceding inequalities, since the Gaussian measure $\nu$ has finite moments of every order, it is clear that $\Lambda$ satisfies the integrability condition \eqref{strongexpon}.
\end{proof}

We define the following analog of the log mgf in the case of weighted sums,
\begin{equation}\label{psidef}
\Psi(t) \doteq \int_\R \Lambda(tu) \tfrac{1}{\sqrt{2\pi}} e^{-u^2/2}du, \quad t\in \R.
\end{equation}
Our first main result is the following.

\begin{theorem}[Weighted LDP]\label{th-qldp}
Assume \eqref{sigproj} and \eqref{strongexpon}. Then, for  $\sigma$-a.e.\  $\theta \in \seq$,   the sequence $(W_\theta^{(n)})_{n\in\N}$ of \eqref{weightproj} satisfies an LDP with the convex good rate function $\ir_\sigma$, given by
\begin{equation}\label{irsidef}
  \ir_\sigma(w) \doteq \Psi^*(w) = \sup_{t\in\R} \{ tw - \Psi(t)\}.
\end{equation}
\end{theorem}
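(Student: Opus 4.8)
The plan is to prove the LDP via the Gärtner–Ellis theorem applied to the sequence $(W_\theta^{(n)})_{n\in\N}$, for a fixed generic $\theta$. The key object is the normalized cumulant generating function
\[
\Lambda_n^\theta(t) \doteq \tfrac{1}{n}\log \E\bigl[e^{n t W_\theta^{(n)}}\bigr] = \tfrac{1}{n}\log \E\Bigl[\exp\Bigl(t\sqrt{n}\sum_{i=1}^n \theta_i^{(n)} X_i\Bigr)\Bigr] = \tfrac{1}{n}\sum_{i=1}^n \Lambda\bigl(t\sqrt{n}\,\theta_i^{(n)}\bigr),
\]
using independence of the $X_i$'s. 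So everything reduces to showing that, for $\sigma$-a.e.\ $\theta$ and every $t\in\R$,
\[
\tfrac{1}{n}\sum_{i=1}^n \Lambda\bigl(t\sqrt{n}\,\theta_i^{(n)}\bigr) \;\xrightarrow[n\to\infty]{}\; \Psi(t) = \int_\R \Lambda(tu)\,\nu(du).
\]
The intuition is that under $\sigma_{n-1}$, the vector $\sqrt{n}\,\theta^{(n)} = (\sqrt{n}\theta_1^{(n)},\dots,\sqrt{n}\theta_n^{(n)})$ has coordinates that behave like i.i.d.\ standard Gaussians (this is the classical Poincaré–Borel / Maxwell–Boltzmann heuristic), so the empirical average of $\Lambda(t\cdot)$ over these coordinates converges to the Gaussian integral $\Psi(t)$. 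Once this pointwise convergence of $\Lambda_n^\theta$ to the finite, differentiable, convex function $\Psi$ is established, Gärtner–Ellis delivers the LDP with rate function $\Psi^*$; one must separately check that $\Psi$ is finite everywhere (immediate from \eqref{strongexpon}, since $|\Psi(t)|\le\int|\Lambda(tu)|\nu(du)<\infty$), lower semicontinuous and essentially smooth, and that $\Psi^*$ is a convex good rate function (convexity and goodness follow from finiteness and superlinear growth of $\Psi^*$, which in turn follows from $\Psi<\infty$ everywhere).

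The main work — and the main obstacle — is the almost-sure convergence $\tfrac{1}{n}\sum_{i=1}^n \Lambda(t\sqrt{n}\theta_i^{(n)})\to\Psi(t)$ for $\sigma$-a.e.\ $\theta$. I would handle this in two stages. First, compute the mean: using the representation $\theta^{(n)} \eqdist G^{(n)}/\|G^{(n)}\|_n$ with $G^{(n)}=(G_1,\dots,G_n)$ i.i.d.\ standard Gaussian, one has $\sqrt{n}\theta_i^{(n)} \eqdist \sqrt{n}\,G_i/\|G^{(n)}\|_n$, and since $\|G^{(n)}\|_n/\sqrt{n}\to 1$ a.s., the summands are close to $\Lambda(tG_i)$; then $\E[\tfrac1n\sum_i \Lambda(t\sqrt{n}\theta_i^{(n)})] \to \E[\Lambda(tG_1)] = \Psi(t)$ by a symmetry/exchangeability argument plus a dominated convergence estimate controlled by \eqref{strongexpon}. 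Second, control the fluctuations: show $\mathrm{Var}_\sigma\bigl(\tfrac1n\sum_i \Lambda(t\sqrt{n}\theta_i^{(n)})\bigr) = O(n^{-1})$ (or at least summably small along a subsequence), which is where the fourth-moment hypothesis in \eqref{strongexpon} is used — expanding the variance produces terms of the form $\E[\Lambda(t\sqrt n\theta_1^{(n)})^2]$ and cross terms $\E[\Lambda(t\sqrt n\theta_1^{(n)})\Lambda(t\sqrt n\theta_2^{(n)})]$, and bounding the decay of their difference requires integrability of $\Lambda(tu)^2$ against Gaussian-type densities together with the near-independence of distinct coordinates of a uniform point on the sphere (quantified, e.g., via the Gaussian representation and concentration of $\|G^{(n)}\|_n$). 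Then Chebyshev plus Borel–Cantelli along $n^2$ (say), combined with monotonicity/continuity in $t$ to upgrade from a countable dense set of $t$ to all $t$, gives the a.s.\ statement; one finally intersects the null sets over a countable dense set of $t$ and uses convexity of $t\mapsto\Lambda_n^\theta(t)$ to pass to all $t\in\R$.

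A cleaner packaging of the same idea, which I would actually prefer to write, is to first prove a single lemma: for $\sigma$-a.e.\ $\theta$, the empirical measures $L_n^\theta \doteq \tfrac1n\sum_{i=1}^n \delta_{\sqrt n\,\theta_i^{(n)}}$ converge weakly to $\nu$, and moreover $\int |u|^k\,dL_n^\theta(u)\to\int|u|^k\,d\nu(u)$ for each $k$ (convergence of moments), again via the Gaussian representation and a variance bound using \eqref{strongexpon}. Given this, $\Lambda_n^\theta(t) = \int \Lambda(tu)\,dL_n^\theta(u)\to\int\Lambda(tu)\,d\nu(u) = \Psi(t)$ follows by a uniform-integrability argument: $\Lambda$ is continuous and, by the polynomial bound on $\Lambda$ implicit in Lemma \ref{lem-tail}-type control (or directly from \eqref{strongexpon} after a truncation), $\{\Lambda(tu)\}$ is uniformly integrable under $(L_n^\theta)$. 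This isolates the probabilistic core (concentration of the uniform measure on the sphere) from the soft analysis (Gärtner–Ellis, properties of Legendre transforms), and makes the role of the fourth-moment hypothesis \eqref{strongexpon} transparent — it is exactly what is needed to make the variance bounds summable so that Borel–Cantelli applies, uniformly enough in $t$.
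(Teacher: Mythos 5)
Your overall strategy is essentially the paper's: represent the uniform direction via i.i.d.\ Gaussians, show that the normalized log mgf $\tfrac1n\sum_{i=1}^n\Lambda(t\sqrt n\,\theta^{(n)}_i)$ converges $\sigma$-a.s.\ to $\Psi(t)$ for each fixed $t$, interchange the quantifiers over a countable dense set of $t$ using convexity of the log mgfs, and invoke G\"artner--Ellis together with finiteness and differentiability of $\Psi$. There are two points to raise, one of which is a genuine gap.

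The gap is in your almost-sure law of large numbers. You propose a variance bound of order $n^{-1}$, Chebyshev, and Borel--Cantelli ``along $n^2$''. For a triangular array this does not close: the rows $\theta^{(n)}$ for different $n$ are entirely different vectors (and under a general $\sigma$ satisfying \eqref{sigproj} they may be independent or arbitrarily dependent across $n$), so there is no monotonicity in $n$ with which to interpolate between the gridpoints $k^2$ and $(k+1)^2$. You would obtain a.s.\ convergence only along the subsequence, which is not enough for G\"artner--Ellis. The hypothesis \eqref{strongexpon} is a \emph{fourth}-moment condition precisely so that one can apply Markov's inequality with fourth moments to the centered row averages and obtain tail bounds of order $n^{-2}$, summable over \emph{all} $n$; Borel--Cantelli then gives the limit along the full sequence with no interpolation (this is the standard strong LLN for triangular arrays with i.i.d.\ rows and finite fourth moments, which is what the paper cites). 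You have the right hypothesis in hand but deploy it only to make second moments finite, which does not suffice.

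The second point is a structural difference, not an error, but it costs you extra work. You keep the normalization inside the argument of $\Lambda$, writing $t\sqrt n\,\theta^{(n)}_i \eqdist t a_n G_i$ with $a_n = \sqrt n/\|G^{(n)}\|_n \to 1$, and must then argue that $\tfrac1n\sum_i\Lambda(ta_nG_i)$ and $\tfrac1n\sum_i\Lambda(tG_i)$ share a limit; since $\Lambda$ may grow superlinearly this needs a real argument (e.g., locally uniform convergence of the convex functions $s\mapsto\tfrac1n\sum_i\Lambda(sG_i)$ to the finite convex limit $\Psi$, evaluated at $s=ta_n$), which you only gesture at via ``dominated convergence'' and ``uniform integrability''. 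The paper instead pulls the factor out of $\Lambda$ entirely: it proves the LDP for $\widehat W^{(n)}_z=\tfrac1n\sum_i X_i z^{(n)}_i$ with exactly i.i.d.\ Gaussian weights in each row, and then disposes of the multiplicative factor $\sqrt n/\|z^{(n)}\|_n\to1$ acting on the random variables themselves by an exponential-equivalence lemma (valid because the limiting rate function $\Psi^*$ is quasiconvex with compact level sets). Either route can be made to work, but the exponential-equivalence step is the cleaner way to quarantine the norm fluctuation, and you should either adopt it or spell out the locally-uniform-convergence argument.
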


The proof of Theorem \ref{th-qldp} is given in Section \ref{sec-sur}, with intermediate steps established in Section \ref{ssec-surf} and Section \ref{ssec-expequiv}, and the proof completed in Section \ref{ssec-pfqldp}.

In principle, the rate function $\ir_\sigma$ of Theorem \ref{th-qldp} could depend on the particular choice of $\theta$, but our result shows that the rate function is the same for $\sigma$-a.e.\ $\theta$. In the case where $\sigma$ is the product measure $\sigma=\bigotimes_{n\in \N} \sigma_{n-1}$, this follows immediately from the Kolmogorov zero-one law. That is, let $\mathcal{T}_n$ be the sigma-algebra generated by $(\theta^{(k)})_{k\ge n}$, and let 
\begin{equation}\label{taildef}
\mathcal{T} \doteq \bigcap_{n=1}^\infty \mathcal{T}_n  
\end{equation}
denote the \emph{tail sigma-algebra} induced by $(\theta^{(1)},\theta^{(2)},\dots)$. The rate function $\ir_\sigma$ is measurable with respect to $\mathcal{T}$, and the Kolmogorov zero-one law states that $\mathcal{T}$ is trivial under the product measure. Hence, $\ir_\sigma$ coincides for $\sigma$-a.e.\ $\theta\in\seq$. However, our claim holds for general $\sigma$ satisfying \eqref{sigproj}. In particular, Example \ref{ex-sigma}(ii) gives an example of $\sigma$ such that $\theta^{(1)},\theta^{(2)},\dots$ are highly dependent, $\mathcal{T}$ is not trivial, and hence, the lack of dependence of the rate function $\ir_\sigma$ on $\theta$ is not \emph{a priori} obvious.

Given the $\sigma$-a.e.\ statement of Theorem \ref{th-qldp}, it is natural to ask what happens on the set of measure zero in $\seq$ where the stated LDP does not hold. In particular, our second main result Theorem \ref{th-atyp} shows that under certain additional conditions on $\Lambda$, the sequence of directions $\iota$ associated with Cram\'er's theorem is exceptional, in the sense that Cram\'er's rate function $\ir_\iota$ differs from the universal rate function $\ir_\sigma$. For the following theorem, we assume $\gamma$ is symmetric, or specifically:
\begin{equation}
\forall\, t\in \R, \quad \Lambda(t)=\Lambda(-t). \tag{H3} \label{symm}
\end{equation}

\begin{theorem}[Atypicality]\label{th-atyp}
Assume $\Lambda$ satisfies  \eqref{symm}, and let $\ir_\iota$ and $\ir_\sigma$ be given by \eqref{iriodef} and \eqref{irsidef}, respectively. 
\begin{enumerate}[label=\emph{\alph*.}]
\item If $\Lambda \circ \sqrt{\cdot}$ is concave on $\R_+$, then $\ir_\sigma(w) \ge \ir_\iota(w)$ for all $w\in \R$.
\item If $\Lambda \circ \sqrt{\cdot}$ is convex on $\R_+$, then  $\ir_\sigma(w) \le \ir_\iota(w)$  for all $w\in \R$. 
\item If $\Lambda \circ \sqrt{\cdot}$ is concave or convex, but not linear, on $\R_+$, then $\ir_\sigma(w) = \ir_\iota(w) < \infty$ if and only if $w=0$. 
\end{enumerate}
\end{theorem}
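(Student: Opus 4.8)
The plan is to compare the two log-moment generating functions $\Lambda$ and $\Psi$ directly and then transfer the comparison to the rate functions using the order-reversing property of the Legendre transform. The key observation is that, by \eqref{symm}, if we set $g(s) \doteq \Lambda(\sqrt{s})$ for $s \ge 0$, then $\Lambda(t) = g(t^2)$ while $\Psi(t) = \int_\R g(t^2 u^2)\,\nu(du)$, and $\int_\R u^2\,\nu(du) = 1$; that is, $\Lambda(t)$ evaluates $g$ at the mean of the random variable $t^2 U^2$ (with $U \sim \nu$), whereas $\Psi(t)$ averages $g$ over it. Jensen's inequality then yields $\Psi \le \Lambda$ pointwise when $g$ is concave and $\Psi \ge \Lambda$ pointwise when $g$ is convex, and since $f_1 \le f_2$ implies $f_1^* \ge f_2^*$, parts~(a) and~(b) follow at once; the only input needed here is the finiteness of $\Psi$, which holds under \eqref{strongexpon}.

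For part~(c), the easy direction is that $\ir_\sigma(0) = \ir_\iota(0) = 0$: by \eqref{symm} one has $\Lambda \ge 0$ with $\Lambda(0)=0$, and likewise for $\Psi$, so both conjugates vanish at the origin. For the converse I would fix $w \ne 0$ --- by evenness of $\Lambda$ and $\Psi$ one may assume $w > 0$ --- and upgrade the pointwise comparison between $\Psi$ and $\Lambda$ to a strict comparison of $\ir_\sigma(w)$ and $\ir_\iota(w)$. The mechanism is strict Jensen: since $g$ is concave or convex but not linear and $g(0) = \Lambda(0) = 0$, $g$ is not affine on $[0,\infty)$, while $t^2 U^2$ has full support there for $t \ne 0$, so $\Psi(t) \ne \Lambda(t)$ for every $t \ne 0$. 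One then locates the maximizer $t^\star$ in the variational problem defining whichever of $\Lambda^*(w)$, $\Psi^*(w)$ is the smaller of the two; if $t^\star$ is finite it is automatically nonzero (the derivative of the relevant objective at $0$ equals $w > 0$, as $\Lambda'(0) = \Psi'(0) = 0$), and plugging $t^\star$ into the other variational problem, where $g$ now contributes a strictly positive gap, gives $\ir_\sigma(w) \ne \ir_\iota(w)$.

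The one genuine difficulty is that this maximizer need not be attained at a finite point, and the two cases behave differently. If $g$ is convex and not linear then (using $g \ge 0$, $g(0)=0$, so $g$ is nondecreasing) one has $g(s) \ge \delta s - C$ for some $\delta, C > 0$, whence $\Lambda(t) = g(t^2) \ge \delta t^2 - C$ and, integrating, $\Psi(t) \ge \delta t^2 - C$ as well; both grow superlinearly, so $\Psi^*$ is finite and its defining supremum is attained, and the argument above applies. If $g$ is concave, however, $\Lambda$ may be asymptotically linear (e.g.\ $\Lambda(t) \sim c|t|$, as for a scaled Rademacher law), in which case $\Lambda^*(w)$ can be finite with its supremum merely approached as $t \to \infty$; then $\Lambda'(t) < w$ for all $t$, hence $\Lambda(t) < wt$, i.e.\ $g(s) < w\sqrt{s}$, and averaging against $\nu$ together with the numerical fact $\int_\R |u|\,\nu(du) = \sqrt{2/\pi} < 1$ gives $\Psi(t) \le \sqrt{2/\pi}\,w|t|$, so that $\ir_\sigma(w) = \Psi^*(w) = +\infty$. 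Hence, for every $w \ne 0$, either $\ir_\iota(w) = \infty$ or $\ir_\sigma(w) \ne \ir_\iota(w)$, which is the asserted dichotomy. I expect the main obstacle to be exactly this last (concave) case: recognizing that $\Lambda$ can be asymptotically linear and that the strictly-less-than-one first absolute moment of a standard Gaussian is what forces $\ir_\sigma(w)$ to blow up there. The remaining work --- finiteness, differentiability, convexity and evenness of $\Lambda$ and $\Psi$ and the vanishing of their derivatives at $0$ --- is routine and follows from \eqref{strongexpon} and \eqref{symm}.
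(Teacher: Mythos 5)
Your proposal is correct and, for parts (a), (b) and the core of (c), follows essentially the same route as the paper: the symmetry condition turns $\Lambda(t)$ into $g(t^2)$ with $g=\Lambda\circ\sqrt{\cdot}$, Jensen's inequality applied to $t^2Z^2$ gives the pointwise comparison of $\Psi$ and $\Lambda$ (this is exactly Lemma \ref{lem-mgfineq}), order reversal of the Legendre transform gives (a) and (b), and strictness in (c) is obtained by evaluating the larger conjugate at the maximizer of the smaller one, where strict Jensen produces a positive gap unless that maximizer is $0$, which for $w\ne 0$ it is not. The one place where you genuinely diverge from --- and in fact improve on --- the paper is the endpoint analysis in part (c). The paper's proof takes, for every $w$ with $\Lambda^*(w)<\infty$, the maximizer $t_w$ of \eqref{txmax}, but its Lemma \ref{lem-lamstan} only guarantees that $t_w$ exists for $w$ in the \emph{interior} of $D_{\Lambda^*}$; when $\Lambda$ is asymptotically linear (e.g.\ the Rademacher law, for which $\Lambda\circ\sqrt{\cdot}$ is concave and $\Lambda^*(\pm1)=\log 2<\infty$ with the supremum approached only as $t\to\pm\infty$) the argument as written does not apply. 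Your observation that in this regime $\Lambda(t)<wt$, hence $\Psi(t)\le \sqrt{2/\pi}\,w|t|$ with $\sqrt{2/\pi}<1$, so that $\ir_\sigma(w)=+\infty\neq\ir_\iota(w)$, handles exactly these boundary points and closes that gap; the complementary remark that in the convex non-linear case $g(s)\ge\delta s-C$ forces the supremum defining $\Psi^*(w)$ to be attained is also correct and makes the case split complete.
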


The proof of Theorem \ref{th-atyp} is given in Section \ref{sec-atyp}.

\medskip

We now provide some sufficient conditions (established in \cite{barthe2003extremal}) for the convexity or concavity conditions of Theorem \ref{th-atyp} to hold.

\begin{proposition} \label{prop-con}
Assume  the exponential moment condition \eqref{weakexpon} and the symmetry condition \eqref{symm}.
\begin{enumerate}[label=\emph{\roman*.}]
\item  Suppose $\gamma\ne\delta_0$, the Dirac mass at 0. Define $\varphi:\N\ra\R$ by
\begin{equation*}
  \varphi(k) \doteq (2k+1) \frac{\E[|X_1|^{2k}]}{\E[|X_1|^{2k+2}]}, \quad k\in \N.
\end{equation*}
If $\varphi$ is non-decreasing (resp., non-increasing), then $\Lambda \circ \sqrt{\cdot}$ is concave (resp., convex) on $\R_+$.

\item Suppose $\gamma$ has density $f$ such that $\log f \circ \sqrt{\cdot}$ is concave (resp.,  convex) on $\R_+$. Then $\Lambda \circ \sqrt{\cdot}$ is concave (resp., convex) on $\R_+$.
\end{enumerate}
\end{proposition}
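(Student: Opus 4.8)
The plan is to prove part (i) by a self-contained power-series computation and then to deduce part (ii) from part (i), the only genuinely non-elementary ingredient being a log-concavity property of the Gamma-normalized Mellin transform supplied by \cite{barthe2003extremal}.

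\emph{Part (i).} Write $m_{2k}\doteq\E[X_1^{2k}]$, which is strictly positive for every $k$ since $\gamma\ne\delta_0$. Under \eqref{weakexpon} and \eqref{symm}, $\Lambda$ is even and real-analytic on the interior of its effective domain, and
\[
  M(u)\doteq e^{\Lambda(u)}=\E[\cosh(uX_1)]=\sum_{k\ge 0}a_k u^{2k},\qquad a_k\doteq\frac{m_{2k}}{(2k)!}>0 .
\]
A direct application of the chain rule shows that $\Lambda\circ\sqrt{\cdot}$ is concave (respectively, convex) on $\R_+$ if and only if $u\mapsto \Lambda'(u)/u$ is non-increasing (respectively, non-decreasing) on $(0,\infty)$. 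Differentiating the series and writing $v=u^2$,
\[
  \frac{\Lambda'(u)}{u}=\frac{M'(u)/u}{M(u)}=\frac{\sum_{j\ge 0}2(j+1)a_{j+1}v^{j}}{\sum_{j\ge 0}a_j v^{j}}=:\frac{P(v)}{Q(v)},
\]
a ratio of two power series with strictly positive coefficients $p_j=2(j+1)a_{j+1}$ and $q_j=a_j$; a short computation using $a_{j+1}/a_j=m_{2j+2}/((2j+1)(2j+2)m_{2j})$ gives the identity $p_j/q_j=1/\varphi(j)$ for all $j$. The next step is the elementary fact that a ratio of power series with positive coefficients is monotone on $\R_+$ in the same sense as the sequence $(p_j/q_j)_j$ of coefficient ratios: this follows from $P'Q-PQ'=\tfrac12\sum_{j,k}(j-k)(p_jq_k-p_kq_j)v^{j+k-1}$, each summand having the sign of $-(j-k)\big(p_j/q_j-p_k/q_k\big)$. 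Hence $\varphi$ non-decreasing forces $(p_j/q_j)_j$ non-increasing, hence $P/Q$ non-increasing, hence $\Lambda\circ\sqrt{\cdot}$ concave; the convex case is symmetric.

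\emph{Part (ii).} Suppose first that $\log f\circ\sqrt{\cdot}$ is concave on $\R_+$, so that $\rho(t)\doteq f(\sqrt t)$ is a log-concave density on $\R_+$. The substitution $x=\sqrt t$ yields $m_{2k}=\int_{\R}x^{2k}f(x)\,dx=\int_0^\infty t^{k-1/2}\rho(t)\,dt$ for every $k\ge 0$. I would then invoke the result of \cite{barthe2003extremal} that, for a log-concave density $\rho$ on $\R_+$, the map $p\mapsto\Gamma(p+1)^{-1}\int_0^\infty t^{p}\rho(t)\,dt$ is log-concave wherever finite. Applying this to the three exponents $p\in\{k-\tfrac12,\ k+\tfrac12,\ k+\tfrac32\}$ and using $\Gamma(k+\tfrac32)=(k+\tfrac12)\Gamma(k+\tfrac12)$ and $\Gamma(k+\tfrac52)=(k+\tfrac32)(k+\tfrac12)\Gamma(k+\tfrac12)$, all the Gamma factors cancel and the log-concavity inequality collapses to
\[
  (2k+3)\,m_{2k+2}^2\ \ge\ (2k+1)\,m_{2k}\,m_{2k+4},\qquad k\in\N,
\]
which is precisely the statement that $\varphi$ is non-decreasing; part (i) then gives concavity of $\Lambda\circ\sqrt{\cdot}$. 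If instead $\log f\circ\sqrt{\cdot}$ is convex, then $\rho$ is log-convex, the symmetric statement in \cite{barthe2003extremal} gives log-convexity of $p\mapsto\Gamma(p+1)^{-1}\int_0^\infty t^p\rho(t)\,dt$, every inequality above reverses, $\varphi$ is non-increasing, and part (i) yields convexity of $\Lambda\circ\sqrt{\cdot}$.

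\emph{Main obstacle.} The crux is the log-concavity/log-convexity of the Gamma-normalized Mellin transform $p\mapsto\Gamma(p+1)^{-1}\int_0^\infty t^p\rho(t)\,dt$ for log-concave/log-convex $\rho$; this is the one analytic input, and it is what \cite{barthe2003extremal} provides, the Gamma normalization making $\rho(t)=e^{-\lambda t}$ the (log-linear) extremal case. Everything else---the identity $p_j/q_j=1/\varphi(j)$, the power-series ratio lemma, and the cancellation of Gamma factors---is routine bookkeeping. One should also track the effective domain of $\Lambda$: the power-series manipulations are valid where $M<\infty$, so ``concave/convex on $\R_+$'' is to be read on $\{\Lambda<\infty\}$, which is all of $\R_+$ under the stronger hypothesis \eqref{strongexpon}; and the positivity $m_{2k}>0$, used throughout part (i), is where $\gamma\ne\delta_0$ enters.
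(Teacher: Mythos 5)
Your argument is correct, but it takes a genuinely different and far more self-contained route than the paper, which disposes of the proposition in three lines by citing Theorem~7 of \cite{barthe2003extremal} for part~i and Theorem~12 of \cite{barthe2003extremal} (applied to $f\circ\sqrt{\cdot}$) for part~ii. What you have done, in effect, is reprove Barthe--Koldobsky's Theorem~7 from scratch: the reduction of concavity of $\Lambda\circ\sqrt{\cdot}$ to monotonicity of $u\mapsto\Lambda'(u)/u$, the identification $p_j/q_j=1/\varphi(j)$, and the monotone-ratio lemma for power series with positive coefficients are all verified correctly, and you then route part~ii \emph{through} part~i via a Borell-type lemma (log-concavity/log-convexity of $p\mapsto\Gamma(p+1)^{-1}\int_0^\infty t^p\rho(t)\,dt$), with the Gamma factors cancelling exactly to give $(2k+3)m_{2k+2}^2\ge(2k+1)m_{2k}m_{2k+4}$, i.e.\ monotonicity of $\varphi$. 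The trade-off is clear: the paper's proof is shorter but entirely dependent on the external reference; yours makes part~i elementary and exposes that part~ii is really a discrete consequence of part~i plus one classical analytic input. Two small points. First, the sign in your prose is off: the symmetrized summand $(j-k)(p_jq_k-p_kq_j)v^{j+k-1}$ has the sign of $+(j-k)(p_j/q_j-p_k/q_k)$, not its negative; your conclusion (non-increasing coefficient ratios imply $P/Q$ non-increasing) is nevertheless the right one. Second, the normalized-Mellin-transform lemma you invoke is really Borell's lemma on complements of Lyapunov's inequality rather than a statement you will find verbatim in \cite{barthe2003extremal}; it is true (and for the log-convex case one should note that an integrable log-convex $\rho$ on $\R_+$ is automatically non-increasing, so the hypothesis of that lemma is met), but the attribution should be adjusted or the paper's Theorem~12 cited directly as the authors do. Your caveat about reading ``concave/convex on $\R_+$'' on the effective domain of $\Lambda$ is apt and applies equally to the paper's own statement.
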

\begin{proof}
Part i.\ is established in Theorem 7 of \cite{barthe2003extremal}. Part ii.\ follows from applying Theorem 12 of \cite{barthe2003extremal} with their $f$ replaced by our $f\circ \sqrt{\cdot}$, and noticing that the integrability of $f\circ\sqrt{\cdot}$ follows from the fact that $f$ has finite first moment, due to the exponential moment condition of \eqref{weakexpon}.
\end{proof}

\begin{example}
Suppose $\gamma$ is the \emph{generalized normal distribution}
with location 0, scale $\alpha > 0$, and shape $\beta >
1$; that is, $\gamma=\mu_{\alpha,\beta}$, where 
\begin{equation*}
  \mu_{\alpha,\beta}(dx) \doteq \frac{1}{2\alpha\Gamma(1+\frac{1}{\beta})}e^{-(|x|/\alpha)^\beta}dx
\end{equation*}
It follows from Lemma \ref{lem-tail} that  $\mu_{\alpha,\beta}$ satisfies \eqref{strongexpon}, which implies \eqref{weakexpon}. It is also easy to see that $\mu_{\alpha,\beta}$ satisfies \eqref{symm}. Thus, the conditions of Proposition \ref{prop-con} are satisfied. It follows immediately from  Proposition \ref{prop-con}(ii) that  for $\beta \ge 2$ (resp., for $\beta \le 2$), $\Lambda \circ \sqrt{\cdot}$ is concave (resp., convex). In fact, for $\beta \ne 2$, the concavity (resp., convexity) is strict.
\end{example}

The preceding example suggests the particular role of the Gaussian, which corresponds to $\beta =2$.  In particular, $\gamma=\mu_{\alpha,2}$ for some $\alpha > 0$ if and only if $\Lambda\circ \sqrt{\cdot}$ is linear. Thus, we could interpret the conditions of Theorem \ref{th-atyp} as evaluating whether our distribution of interest is ``more" or ``less" log-concave than the Gaussian. We also have the following result in the Gaussian case (i.e., when $\gamma=\mu_{\alpha,2}$), which holds for \emph{all} $\theta$ as opposed to just for $\sigma$-a.e.\ $\theta$.

\begin{proposition}\label{prop-gsn}
Suppose $\gamma = \mu_{\alpha,2}$ for some $\alpha > 0$. Then, for all $\theta \in \seq$, the sequence $(W_\theta^{(n)})_{n\in \N}$ satisfies an LDP with the good rate function $\Psi^*(w) = \Lambda^*(w) = (w/\alpha)^2$, where $\Lambda^*$ is defined in \eqref{legendef} with $\Lambda$ the log mgf of the Gaussian with mean 0 and variance $\alpha^2/2$.
\end{proposition}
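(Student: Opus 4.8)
The plan is to reduce everything to Cram\'er's theorem via the rotation invariance of the isotropic Gaussian. First I would dispatch the elementary identities. Since $\mu_{\alpha,2}(dx) = (\alpha\sqrt{\pi})^{-1} e^{-x^2/\alpha^2}\,dx$ is the centred Gaussian of variance $\alpha^2/2$, its log mgf is $\Lambda(t) = \alpha^2 t^2/4$ and $\Lambda^*(w) = (w/\alpha)^2$. Substituting $\Lambda(tu) = \alpha^2 t^2 u^2/4$ into the definition \eqref{psidef} of $\Psi$ and using $\int_\R u^2\,\nu(du) = 1$ gives $\Psi(t) = \alpha^2 t^2/4 = \Lambda(t)$, hence $\Psi^* = \Lambda^*$; so the three functions appearing in the statement coincide and equal $(w/\alpha)^2$. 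Also, by Lemma \ref{lem-tail}, $\mu_{\alpha,2}$ satisfies \eqref{strongexpon}, and a fortiori \eqref{weakexpon}, so Cram\'er's theorem is applicable.

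The crux is the following observation. The random vector $X^{(n)} = (X_1,\dots,X_n)$ has the isotropic law $N\!\big(0,\tfrac{\alpha^2}{2}I_n\big)$, which is invariant under the orthogonal group $O(n)$. Consequently, for \emph{any} $\thn \in \sphn$, choosing $O\in O(n)$ with $O\thn = \onen$ yields
\[
  \langle X^{(n)}, \thn\rangle_n = \langle OX^{(n)}, \onen\rangle_n \eqdist \langle X^{(n)}, \onen\rangle_n,
\]
so that $W_\theta^{(n)} \eqdist W_\onen$ for every $\theta \in \seq$ and every $n\in\N$. In particular, the law of $W_\theta^{(n)}$ --- which is exactly $N(0,\alpha^2/(2n))$ --- does not depend on $\theta$ at all, so that an LDP valid for one $\theta$ is automatically valid for every $\theta\in\seq$. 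Invoking Cram\'er's theorem for the sequence $(W_\onen)_{n\in\N}$ then yields the LDP with the good rate function $\Lambda^*(w) = (w/\alpha)^2 = \Psi^*(w)$, which is the claim.

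There is essentially no hard step here: the only points needing care are the identity $\Psi = \Lambda$ and the $O(n)$-invariance reduction, both routine. If one prefers to avoid citing Cram\'er's theorem, one can instead apply the G\"artner--Ellis theorem directly to the explicit law $N(0,\alpha^2/(2n))$ of $W_\theta^{(n)}$: its scaled cumulant generating function $\tfrac{1}{n}\log\E[e^{ntW_\theta^{(n)}}] = \alpha^2 t^2/4 = \Psi(t)$ is finite and differentiable on all of $\R$, so G\"artner--Ellis gives the LDP with good rate function $\Psi^*$. I would include the proposition mainly for its conceptual content: it singles out the Gaussian (consistently with the Example and the discussion following it) as the one case in which the Cram\'er direction ceases to be atypical, and in which the weighted LDP in fact holds uniformly over all sequences of directions.
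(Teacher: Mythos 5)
Your proof is correct and follows essentially the same route as the paper's: spherical symmetry of the isotropic Gaussian shows the law of $W_\theta^{(n)}$ is independent of $\theta$, and the LDP then follows from Cram\'er's theorem, with the explicit computation giving $\Psi=\Lambda$ and $\Psi^*=\Lambda^*=(w/\alpha)^2$. The extra details you supply (the verification of \eqref{strongexpon} via Lemma \ref{lem-tail} and the alternative G\"artner--Ellis derivation) are fine but not needed.
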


\begin{proof}
This follows from the fact that for all $n\in \N$, the Gaussian measure on $\R^n$ is spherically symmetric, and hence, for any $\theta^{(n)} \in \sphn$, the law of $\langle X^{(n)}, \theta^{(n)}\rangle_n$ is the same as the law of $\langle X^{(n)}, \onen\rangle_n$. Thus, the LDP for $(W_\theta^{(n)})_{n\in\N}$ follows from the classical Cram\'er's theorem for empirical means of i.i.d.\ Gaussians, for which the rate function can be easily computed to be $\Lambda^*(w) = (w/\alpha)^2$.
\end{proof}

\begin{remark}
It is not clear whether a converse of Proposition \ref{prop-gsn} holds. That is, whether $\ir_\sigma \equiv \ir_\iota$ if and only if $\gamma$ is Gaussian. As one possible approach in this direction, it would be sufficient to show that for any measure $\gamma$ satisfying both \eqref{strongexpon} and \eqref{symm} (and possibly some additional natural conditions), the function $\Lambda \circ \sqrt{\cdot}$ must be either concave or convex.
\end{remark}

Aside from the sequence of Cram\'er directions $\iota\in\seq$, another  natural sequence of directions to consider is the sequence of canonical basis vectors, $e_1 = (e_1^{(1)}, e_1^{(2)},\dots) \in \seq$, where
\begin{equation*}
  e_1^{(n)}  \doteq (1,\overbracket{0,\dots, 0}^{n-1 \text{ times }}) \in \sphn.
\end{equation*}
Then $W_{e_1}^{(n)} = X_1/\sqrt{n}$ for all $n$. The following result states that under certain tail conditions, such normalized projections yield a trivial LDP, again with a rate function different from $\ir_\sigma$.

\begin{proposition}\label{prop-e1}
Assume the following condition (which is stronger than \eqref{strongexpon}):
\begin{equation} \tag{H2$'$} \label{strongalt}
\exists \, C < \infty, \, r\in[0,2)  \textnormal{ such that } \forall\,t\in\R, \quad  \Lambda(t) \le C(1+|t|^r).
\end{equation}
Then the sequence $(W_{e_1}^{(n)})_{n\in \N}$ satisfies an LDP with the trivial good rate function $\chi_{0}$ given by
\begin{equation*}
  \chi_0(x) \doteq \left\{\begin{array}{ll}
 0, & x=0;\\
 \infty, & x\ne 0.
 \end{array}\right.
\end{equation*}
\end{proposition}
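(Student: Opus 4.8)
The plan is to verify the LDP directly from the definition by computing upper and lower bounds on $\frac{1}{n}\log\P(W_{e_1}^{(n)}\in\Gamma)$ for Borel sets $\Gamma\subset\R$. Since $W_{e_1}^{(n)}=X_1/\sqrt{n}$, where $X_1$ is a \emph{fixed} random variable not depending on $n$, the laws $\mu_n=\P\circ(W_{e_1}^{(n)})^{-1}$ are simply the pushforwards of the law $\gamma$ of $X_1$ under the scaling $x\mapsto x/\sqrt{n}$, and these concentrate at $0$ at polynomial rather than exponential speed. The candidate rate function $\chi_0$ is clearly lower semicontinuous with compact (indeed finite) level sets, so it remains only to check the two LDP bounds.

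For the \emph{upper bound}, fix a closed set $F\subset\R$. If $0\in F$ then $\inf_F\chi_0=0$ and the bound $\limsup_n\frac1n\log\mu_n(F)\le 0$ is immediate since $\mu_n$ is a probability measure. If $0\notin F$, then $F\subset\{|x|\ge\delta\}$ for some $\delta>0$, so by Markov's inequality applied with an arbitrary $t>0$,
\begin{equation*}
\mu_n(F)\le\P(|X_1|\ge\delta\sqrt{n})\le e^{-t\delta\sqrt{n}}\,\E\big[e^{t|X_1|}\big]\le e^{-t\delta\sqrt{n}}\,\big(e^{\Lambda(t)}+e^{\Lambda(-t)}\big).
\end{equation*}
Taking logarithms, dividing by $n$, and letting $n\to\infty$ gives $\limsup_n\frac1n\log\mu_n(F)\le 0$; since $t>0$ was arbitrary and in fact the bound is $-t\delta/\sqrt n\to 0$, we actually get $\limsup_n\frac1n\log\mu_n(F)=0 > -\infty = -\inf_F\chi_0$ when $0\notin F$. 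Wait --- one must be careful: when $0\notin F$ we need $\limsup_n\frac1n\log\mu_n(F)\le-\inf_F\chi_0=-\infty$, i.e.\ $\mu_n(F)$ must decay \emph{super-exponentially}. This is where \eqref{strongalt} enters: from $\Lambda(\pm t)\le C(1+|t|^r)$ with $r<2$, the Markov bound above yields $\frac1n\log\mu_n(F)\le -t\delta/\sqrt n + C(1+t^r)/n$, and now \emph{optimizing over $t$} — taking $t=t_n\to\infty$ at a suitable rate, e.g.\ $t_n=n^{a}$ with $\frac12<a<\frac1r\wedge 1$ wait $r<2$ so $1/r>1/2$; choosing $a\in(1/2,\,\min(1/r,1))$ if $r\ge1$, or simply $t_n$ growing fast if $r<1$ — makes $t_n\delta\sqrt n$ of order $n^{a+1/2}$ with $a+1/2>1$ while $C t_n^r$ is of order $n^{ar}$ with $ar<1$ hmm I need $ar<1$ i.e.\ $a<1/r$ hmm but also $a<1$? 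Let me just say: choose $t_n = n^{\beta}$ with $\beta$ close to $1/2$ from above; then $t_n\sqrt n\,\delta = \delta n^{\beta+1/2}$ and the exponent $\beta+1/2>1$, while $C t_n^r/n = C n^{r\beta-1}$, and $r\beta - 1 < r\cdot(\text{something} > 1/2)-1$; we need this $<\beta+1/2$, which holds for $\beta$ near $1/2$ since $r<2$ makes $r\beta\approx r/2<1$. Hence $\frac1n\log\mu_n(F)\to-\infty$, giving the required super-exponential decay.

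For the \emph{lower bound}, fix an open set $G\subset\R$. If $0\notin G$ then $\inf_G\chi_0=+\infty$ and the bound $-\infty\le\liminf$ is trivial. If $0\in G$, then $\inf_G\chi_0=0$, and since $G$ is open it contains a neighborhood $(-\delta,\delta)$ of $0$, so
\begin{equation*}
\liminf_{n\to\infty}\tfrac1n\log\mu_n(G)\ge\liminf_{n\to\infty}\tfrac1n\log\P\big(|X_1|<\delta\sqrt n\big)=0,
\end{equation*}
because $\P(|X_1|<\delta\sqrt n)\to 1$, so its logarithm tends to $0$. This matches $-\inf_G\chi_0=0$. Combining the two bounds establishes the LDP with rate function $\chi_0$; goodness is automatic since $\{\chi_0\le c\}$ equals $\{0\}$ for $c\ge0$ and is empty otherwise. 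The only real content, and the main (mild) obstacle, is the optimization over $t$ in the upper bound that upgrades polynomial tail decay to the super-exponential decay demanded by the LDP when $0\notin F$; this is precisely why the growth restriction $r<2$ in \eqref{strongalt} is imposed rather than the weaker \eqref{strongexpon}, and the exponent bookkeeping should be done carefully but presents no conceptual difficulty.
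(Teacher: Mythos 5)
Your proof is correct, but it takes a different route from the paper. The paper simply applies the G\"artner--Ellis theorem: the scaled log mgf is $\Lambda_n(t)=\tfrac1n\Lambda(t\sqrt n)\le \tfrac1n(C|t|^r n^{r/2}+C)\to 0$ for every $t$ because $r<2$, so the limit log mgf is identically $0$ and the rate function is $0^*=\chi_0$. You instead verify the two LDP bounds directly from the definition: the open-set lower bound is trivial since $\mu_n\to\delta_0$ weakly, and the closed-set upper bound reduces to showing $\P(|X_1|\ge\delta\sqrt n)$ decays super-exponentially, which you get from a Chernoff bound with a diverging parameter $t_n=n^\beta$, $\beta$ slightly above $1/2$, so that $t_n\delta\sqrt n=\delta n^{\beta+1/2}$ beats $Ct_n^r=Cn^{r\beta}$ (possible precisely because $r<2$ gives $r\beta<1<\beta+1/2$). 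Both arguments use \eqref{strongalt} at the same critical point, and your version is arguably more transparent about \emph{why} $r<2$ is the right threshold; the paper's version is shorter because G\"artner--Ellis packages the Chernoff optimization for you. Your exponent bookkeeping, once the false starts are stripped out, is sound: for $r\le 1$ any $\beta>1/2$ works, and for $1<r<2$ any $\beta\in(1/2,\tfrac{1}{2(r-1)})$ works, and this interval is nonempty. Do clean up the exploratory asides (``wait'', ``hmm'') and the momentary incorrect claim that $\limsup_n\tfrac1n\log\mu_n(F)=0$ suffices when $0\notin F$ --- you correctly retract it, but the final write-up should state only the super-exponential bound.
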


\begin{proof}
Consider the limit log mgf associated with the G\"artner-Ellis theorem (recalled for convenience later in Theorem \ref{th-ge}). For all $t\in \R$,
\begin{equation*}
\Lambda_n(t) \doteq \frac{1}{n} \log \E[\exp(tnW_{e_1}^{(n)})] = \frac{1}{n}\log\E[\exp(t\sqrt{n} X_1)] = \frac{1}{n} \Lambda(t\sqrt{n}) \le \frac{1}{n} (C|t|^{r} n^{r/2} + C).
\end{equation*}
Since the exponent $r$ of \eqref{strongalt} satisfies $r<2$ by assumption, we have $\lim_{n\ra\infty}\Lambda_n(t) = 0$ for all $t\in \R$. Thus, by the G\"artner-Ellis theorem, the sequence $(W_{e_1}^{(n)})_{n\in\N}$ satisfies an LDP with good rate function $0^*=\chi_0$.
\end{proof}

\subsection{Relation to prior work}

There is a wealth of literature on large deviations for weighted sums, but our work seems to be the first to emphasize the unique position of Cram\'er's theorem in the geometric setting. Moreover, it appears that none of the existing literature is readily adaptable to our particular problem. We offer a partial (but inevitably, incomplete) survey of existing results. 

In the  somewhat classical works of Book, \cite{book1972large} and \cite{book1973large}, we can find asymptotics bounds for quantities of the form
\begin{equation*}
  \P\left( \frac{\sum_{k=1}^n a_{nk} X_k}{\sum_{k=1}^n a_{nk}} > c  \right),
\end{equation*}
where $(a_{nk})_{k\le n, n\in \N}$ is a triangular array of weights such that $\sum_{k=1}^n a_{nk}^2 = 1$ for all $n$.  However, this does not address our setting because if we let $a_{nk} = \thn_k$, we have  $\sum_{k=1}^n a_{nk}^2 = 1$, but this only yields tail bounds of the form $  \P( W_\theta^{(n)} > cn^{-1/2}\sum_{k=1}^n \theta_k^{(n)})$, as opposed to the desired asymptotics for $\P( W_\theta^{(n)} > c)$. Furthermore, Book does not establish an LDP or identify a rate function.

In a more recent line of work, consider \cite{kiesel2000large}, where on their p.\ 932,  their $\lambda$ and $\nu$ correspond to our $n$ and $k$, respectively. For $Z\sim N(0,1)$, we have the correspondence:
\begin{align*}
a_j(n) &= \mathbf{1}_{\{j\le n-1\}} \frac{1}{n} \sqrt{n} \theta_j^{(n)}, \quad j, n\in \N;\\
\sum_{j=0}^\infty a_j(n)^k &\approx \sum_{j=0}^{n-1} \frac{1}{n^k} z_j^k \stackrel{(\zeta\textnormal{-a.e.})}{\approx} \frac{\E[|Z|^k]}{n^{k-1}}  \doteq \frac{a_k}{n^{k-1}} , \quad k,n\in \N;\\
\phi(n) &= n, \quad n\in \N.
\end{align*}
Suppose that the sequence $(a_k)_{k\in \N}$ (which depends on the particular choice of weights $a_j(n)$, $j,n\in \N$) satisfies the following condition (from p.\ 932 of \cite{kiesel2000large}):
\begin{equation}\label{acond}
   \lim_{k\ra \infty} |a_k|^{1/k} < \infty.
\end{equation}
The main result of \cite{kiesel2000large} is that for a sequence of i.i.d.\ random variables $(X_k)_{k\in \N}$ with cumulants $(c_k)_{k\in\N}$, if condition \eqref{acond} holds, then the sequence of weighted means $\frac{1}{n}\sum_{j=1}^n a_j(n) X_j$, $n\in\N$, satisfies an LDP with rate function $\chi^*$, the Legendre transform of $\chi(t) \doteq \sum_{k=2}^\infty \frac{a_kc_k}{k!}t^k$. However, the finiteness condition \eqref{acond} does not hold in our setting of $a_k = \E[|Z|^k]$, since the following limit is infinite:
\begin{equation*}
\lim_{k\ra \infty} \E[|Z|^k]^{1/k} = \sqrt{2} \lim_{k\ra \infty} \Gamma(\tfrac{k+1}{2})^{1/k} = \infty.
\end{equation*}
Therefore, the weighted mean LDP of \cite{kiesel2000large} does not apply in our setting.

Yet more recently, \cite{najim2002cramer} proves an LDP for weighted empirical means similar to \eqref{weightproj}, except with weights that are uniformly bounded (in $n$). Our results correspond to \emph{unbounded} weights $\sqrt{n}\thn_i$ which are not covered by their results. Similarly, \cite{chi2001stochastic} proves an LDP for empirical means of certain bounded functionals, which again fails to apply to our unbounded weights.

In the context of information theory, \cite{dembo2002source} states an LDP for sums of the form $\frac{1}{n}\sum_{i=1}^n \rho(x_i,Y_i)$, where $(x_i)_{i\in\N}$ are ``weights", $(Y_i)_{i\in \N}$ is a sequence of random variables satisfying certain mixing properties, and $\rho:\mathcal{X}\times\mathcal{Y}\ra\R_+$ for Polish spaces $\mathcal{X}$ and $\mathcal{Y}$. The LDP is stated in the form of a \emph{generalized asymptotic equipartition property} for ``distortion measures". However, note that $\rho$ is assumed to be nonnegative, so a function like $\rho(x,y)=xy$ (corresponding to projections) does not fit within the setting of \cite{dembo2002source}. Moreover, their weights $(x_i)_{i\in \N}$ are assumed to be a realization of a stationary ergodic process, which is not the case for our weights $\sqrt{n}\thn$ that are drawn from the scaled sphere $\sqrt{n}\sphn$. This lends our work a geometric rather than information-theoretic interpretation.

The paper \cite{gantert2014large}, co-authored by the first and third authors of this work, also analyzes weighted sums of i.i.d.\ random variables, but there the emphasis is on sums of subexponential random variables, rather than the weights themselves.

The most closely related work to our own is the recent work of \cite{bovier2014conditional}, which gives strong large deviations (i.e., refined asymptotics) for weighted sums of i.i.d.\ random variables and i.i.d.\ weights, conditioned on the weights. Our weights $\sqrt{n}\thn_i$ are not i.i.d., but in Section \ref{ssec-surf} and Section \ref{ssec-expequiv}, we prove that Theorem \ref{th-qldp} can be reduced to an LDP for the sequence $(\widehat{W}_z^{(n)})_{n\in\N}$ defined in \eqref{whatdef}, which is an i.i.d.\ weighted sum, conditional on given weights. With some additional calculations from this point, the rate function $\ir_\sigma$ of Theorem \ref{th-qldp} could then be deduced from the conditional LDP of \cite{bovier2014conditional}, stated in their Theorem 1.6  with rate function defined in their equation (1.13). Note that condition (iii) of their Theorem 1.6 has two parts, but our integrability condition \eqref{strongexpon} corresponds only to their first part; in fact, it follows from Lemma \ref{lem-psidiff} that their second part follows from our condition \eqref{firstexpon}, which is weaker than \eqref{strongexpon}, and thus, need not be assumed separately. Moreover, our research (completed independently) differs due to our emphasis on a geometric point of view; as a consequence, we can explicitly identify a rate function $\ir_\sigma$ and highlight the atypical position occupied by Cram\'er's theorem.

Lastly, the methods we use are a simplification of those developed in a companion paper \cite{gkr2}, where we consider normalized projections of certain \emph{non}-product measures, as well as projections in random directions.

\section{The $\sigma$-almost everywhere\ LDP}\label{sec-sur}

\subsection{The surface measure on $\sphn$}\label{ssec-surf}\label{ssec-surfmeas}

In this section, we recall a convenient representation for a random vector distributed according to the surface measure on $\sphn$, in order to obtain \eqref{sigzet1}, which reduces $\sigma$-a.e.\ statements into more tractable statements about Gaussian random variables. Let $\mathbb{A} \doteq \prod_{n\in \N} \R^n$ denote the space of infinite triangular arrays. That is, $z\in\mathbb{A}$ is of the form $z=(z^{(1)},z^{(2)},\dots)$ where $z^{(n)} \in \R^n$ for all $n\in \N$. Let $\mathcal{R}:\mathbb{A}\rightarrow\mathbb{A}$ be the map such that for $z \in \mathbb{A}$, the $n$-th row of $\mathcal{R}(z)$ is
\begin{equation*}
  [\mathcal{R}(z)]^{(n)} \doteq \frac{z^{(n)}}{\|z^{(n)}\|_{n}}.
\end{equation*}
Let $\bar\pi_n:\mathbb{A}\rightarrow \mathbb{R}^n$ denote the $n$-th row map such that $\bar\pi_n(z) = z^{(n)}$. Let $\nu$ denote the Gaussian measure on $\R$, and let $\nu^{\otimes n}$ denote the standard Gaussian measure on $\R^n$.

\begin{lemma}\label{lem-equivsigzet}
If $\zeta\in\mathcal{P}(\mathbb{A})$ is such that 
\begin{equation} \label{zetdef}
  \zeta \circ \bar\pi_n^{-1} = \nu^{\otimes n}, \quad n\in \N,
\end{equation}
then $\sigma\doteq \zeta \circ \mathcal{R}^{-1}$ satisfies \eqref{sigproj}. Conversely, if $\sigma\in \mathcal{P}(\seq)$ satisfies \eqref{sigproj}, then there exists some $\zeta\in \mathcal{P}(\mathbb{A})$ satisfying \eqref{zetdef} such that $\sigma = \zeta \circ \mathcal{R}^{-1}$.
\end{lemma}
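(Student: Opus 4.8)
The plan is to prove Lemma~\ref{lem-equivsigzet} in two directions, using the well-known fact that if $G = (G_1,\dots,G_n)$ is a standard Gaussian vector on $\R^n$, then $G/\|G\|_n$ is distributed according to $\sigma_{n-1}$ (the uniform measure on $\sphn$) and is independent of $\|G\|_n$; this is precisely the statement that $\mathcal{R}$ applied row-wise pushes $\nu^{\otimes n}$ forward to $\sigma_{n-1}$.

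\emph{Forward direction.} Suppose $\zeta$ satisfies \eqref{zetdef}, and set $\sigma \doteq \zeta \circ \mathcal{R}^{-1}$. I need to check $\sigma \circ \pi_n^{-1} = \sigma_{n-1}$ for each $n$. The key observation is the identity of maps $\pi_n \circ \mathcal{R} = \mathcal{R}_n \circ \bar\pi_n$, where $\mathcal{R}_n : \R^n \to \sphn$ is the single-row normalization $w \mapsto w/\|w\|_n$; this is immediate from the definition of $\mathcal{R}$ and the coordinate maps $\pi_n$, $\bar\pi_n$. Hence $\sigma \circ \pi_n^{-1} = \zeta \circ \mathcal{R}^{-1} \circ \pi_n^{-1} = \zeta \circ (\pi_n \circ \mathcal{R})^{-1} = \zeta \circ \bar\pi_n^{-1} \circ \mathcal{R}_n^{-1} = \nu^{\otimes n} \circ \mathcal{R}_n^{-1} = \sigma_{n-1}$, where the last equality is the classical normalization fact recalled above. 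This gives \eqref{sigproj}. (One should note $\mathcal{R}$ is defined $\zeta$-a.e.\ since $\|z^{(n)}\|_n > 0$ almost surely under a Gaussian; this is harmless.)

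\emph{Converse direction.} Given $\sigma \in \mathcal{P}(\seq)$ satisfying \eqref{sigproj}, I must construct $\zeta \in \mathcal{P}(\mathbb{A})$ with $\zeta \circ \bar\pi_n^{-1} = \nu^{\otimes n}$ and $\sigma = \zeta \circ \mathcal{R}^{-1}$. The natural construction: on an auxiliary probability space, take $\Theta = (\Theta^{(1)}, \Theta^{(2)},\dots) \sim \sigma$, and independently take a sequence $(R_n)_{n\in\N}$ of independent random variables with $R_n^2 \sim \chi^2_n$ (equivalently $R_n = \|G^{(n)}\|_n$ for an independent standard Gaussian $G^{(n)} \in \R^n$), independent of $\Theta$. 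Define $Z^{(n)} \doteq R_n \Theta^{(n)}$ for each $n$, and let $\zeta$ be the law of $Z = (Z^{(1)}, Z^{(2)},\dots) \in \mathbb{A}$. Then $\mathcal{R}(Z)^{(n)} = Z^{(n)}/\|Z^{(n)}\|_n = R_n \Theta^{(n)}/R_n = \Theta^{(n)}$ (using $\|\Theta^{(n)}\|_n = 1$), so $\mathcal{R}(Z) = \Theta$ and hence $\zeta \circ \mathcal{R}^{-1} = \sigma$. It remains to verify $\zeta \circ \bar\pi_n^{-1} = \nu^{\otimes n}$, i.e.\ that $Z^{(n)} = R_n \Theta^{(n)}$ is a standard Gaussian vector on $\R^n$. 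Since $\Theta^{(n)} \sim \sigma_{n-1}$ by \eqref{sigproj} and $R_n$ is independent of $\Theta^{(n)}$ with $R_n^2 \sim \chi^2_n$, the product $R_n \Theta^{(n)}$ has exactly the law of a standard Gaussian vector in $\R^n$ --- this is the converse half of the polar decomposition of the Gaussian (a radial part distributed as $\chi_n$ times an independent uniform direction reconstructs the Gaussian). This completes the construction.

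The routine parts are the measure-theoretic bookkeeping (measurability of $\mathcal{R}$, that pushforwards compose as claimed, that the product construction yields a well-defined Borel probability measure on $\mathbb{A} = \prod_n \R^n$); none of these present real difficulty. The only point that requires a little care, and the one I would call the main obstacle, is the converse direction: one must make sure the radial variables $R_n$ can be chosen \emph{jointly} with $\Theta$ so that each $Z^{(n)}$ is marginally Gaussian, while not over-constraining the joint law across rows --- taking the $R_n$ mutually independent and independent of $\Theta$ works precisely because \eqref{zetdef} only constrains the row marginals of $\zeta$, not the joint dependence structure, mirroring the fact that \eqref{sigproj} only constrains the marginals $\sigma \circ \pi_n^{-1}$. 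I would spell out the polar-decomposition fact (and its converse) as a brief standalone observation, citing a standard reference, before invoking it in both directions.
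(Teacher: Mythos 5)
Your proof is correct and takes essentially the same route as the paper's, whose entire argument is the one-line remark that both directions restate the polar decomposition of the standard Gaussian (namely that $Z^{(n)}/\|Z^{(n)}\|_n$ is uniform on $\sphn$ and independent of $\|Z^{(n)}\|_n$). You simply spell out the pushforward bookkeeping for the forward direction and the explicit construction $Z^{(n)} = R_n\Theta^{(n)}$ for the converse, both of which are sound.
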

\begin{proof}
Both results are merely a restatement of the well known fact that if $Z^{(n)}$ has the $n$-dimensional standard  Gaussian distribution, then $Z^{(n)}/\|Z^{(n)}\|_n$ is uniformly distributed on the unit sphere $\sphn$, and independent of $\|Z^{(n)}\|_n$. 

%
\end{proof}

Note that Lemma \ref{lem-equivsigzet} states that for any given $\sigma \in \mathcal{P}(\seq)$, we can find a corresponding $\zeta\in \mathcal{P}(\mathbb{A})$. Fix such a pair $(\sigma,\zeta)$. Now, for $z\in \mathbb{A}$, define
\begin{equation}\label{whatdef}
\widehat{W}_z^{(n)} \doteq \frac{1}{n} \sum_{i=1}^n X_i\,z_i^{(n)}.
\end{equation}
Then, given $W_\theta^{(n)}$ as defined in \eqref{weightproj}, and any good rate function $\ir:\R\ra[0,\infty]$,  Lemma \ref{lem-equivsigzet} implies that 
\begin{align}
  \sigma &\left(\theta\in \seq : (W_\theta^{(n)})_{n\in \N} \text{ satisfies an LDP  with good rate  function } \ir \right) \notag\\
    &\quad = \zeta\left( z\in \mathbb{A} : (\tfrac{\sqrt{n}}{\|z^{(n)}\|_n} \widehat{W}_z^{(n)})_{n\in \N} \text{ satisfies an LDP  with good rate function } \ir \right). \label{sigzet1}
\end{align}
In addition, Lemma \ref{lem-equivsigzet} yields a large class of examples of $\sigma$ satisfying \eqref{sigproj}, constructed via $\zeta$ satisfying \eqref{zetdef}. We specify two such examples below.

\begin{example}\label{ex-sigma} \quad
\begin{enumerate}[label=\alph*.]
\item Consider the completely independent case, where the elements
  $Z_i^{(n)}$, $i=1,\dots, n$, $n\in \N$, are all independent; then
  the law of $\mathcal{R}(Z)$ is the product measure $\sigma =
  \bigotimes_{n\in \N} \sigma_{n-1}$, where each row $\theta^{(n)}$ 
of $\theta$ is independent under $\sigma$.   As previously noted, the tail sigma-algebra $\mathcal{T}$ induced by the rows (defined in \eqref{taildef}), is trivial in this case due to the Kolmogorov zero-one law.
\item Alternatively, consider the following highly dependent case: let
  $\zeta\in\mathcal{P}(\mathbb{A})$ satisfy \eqref{zetdef} such that for $\zeta$-a.e.\ $z\in \mathbb{A}$, we have
  $z_i^{(n)} = z_i^{(m)}$ for all $i\in \N$ and $m,n\ge i$ (i.e.,
  constant within columns). Then, let $\sigma = \zeta \circ
  \mathcal{R}^{-1}$, so that $\sigma$ satisfies \eqref{sigproj} by Lemma \ref{lem-equivsigzet}. In this case, there is strong dependence across rows which precludes a claim regarding triviality of the tail sigma-algebra $\mathcal{T}$ induced by the rows. In fact, consider the event 
\begin{equation*}
A \doteq \left\{\theta\in\seq:  \lim_{n\ra\infty} \sqrt{n} \thn_1 > 0 \right\}
\end{equation*}
Note that $A$ is measurable with respect to 
$\mathcal{T}$.
 However, we also have due to the strong law of large numbers
 ($\zeta$-a.e., as stated precisely in \eqref{zslln}),
\begin{align*}
\sigma(A) = \zeta\left( z\in \mathbb{A} : \lim_{n\ra\infty} \sqrt{n} \,z_1^{(n)} / \|z^{(n)}\|_{n,2} > 0\right) = \zeta\left( z\in \mathbb{A} : z_1^{(1)}  > 0 \right) = \frac{1}{2}.
\end{align*}
That is, $\mathcal{T}$ is non-trivial, and so $\ir_\sigma$ 
cannot {\em a priori} be declared as $\sigma$-a.e.\ constant 
through a simple analysis of the tail sigma-algebra.  
\end{enumerate}

\end{example}

\begin{remark}\label{rmk-sigma}
We assume the condition \eqref{sigproj} not in an attempt to be as
general as possible, but rather to point out that the universality of
the rate function is a genuinely interesting phenomenon. Specifically,
if we only consider the independent case of Example \ref{ex-sigma}a.,
then the fact that $\ir_\sigma$ is ``universal" (in that it does not
depend on $\theta$) is a consequence of the fact that  the tail
sigma-algebra $\mathcal{T}$ is trivial.
However, Example \ref{ex-sigma}b.\ shows that  universality of the rate function is a more general phenomenon that holds even when $\mathcal{T}$ is non-trivial. The condition \eqref{sigproj} only imposes constraints on the ``marginal" distribution of the $n$-th row of  the array $\theta$, and imposes no restrictions on the dependence across different rows $\thn$, $n\in\N$. In fact, for  $Z\sim \zeta$ satisfying \eqref{zetdef}, the elements of $Z$ need not even be \emph{jointly} Gaussian in order for the law of $\mathcal{R}(Z)$ to satisfy \eqref{sigproj}.
\end{remark}

\subsection{Exponential equivalence}\label{ssec-expequiv}

As a consequence of Lemma \ref{lem-equivsigzet} and the equality in \eqref{sigzet1}, we can replace $\sigma$-a.e.\ statements about $W_\theta^{(n)}$, $n\in\N$ with $\zeta$-a.e.\ statements about $(\sqrt{n}/ \|z^{(n)}\|_n)\widehat{W}_z^{(n)}$, $n\in\N$. In this section, we go further and explain why in the large deviations setting, we can ignore the contribution of the multiplicative factor $\sqrt{n}/\|z^{(n)}\|_n$. That is, we show that such a factor yields an exponentially equivalent sequence, defined as follows.

\begin{definition}\label{def-expequiv}
Let $(\xi_n)_{n\in\N}$  and $(\tilde{\xi}_n)_{n\in\N}$ be two sequences of $\R$-valued random variables such that for all $\delta > 0$,
\begin{equation*}
  \limsup_{n\ra\infty} \frac{1}{n} \log \P( |\xi_n - \tilde{\xi}_n| > \delta ) = -\infty;
\end{equation*}
then $(\xi_n)_{n\in\N}$ and $(\tilde{\xi}_n)_{n\in\N}$ are said to be \emph{exponentially equivalent}.
\end{definition}

\begin{proposition}[\cite{DemZeibook}]
\label{prop-expeq}
If $(\xi_n)_{n\in\N}$ is a sequence of random variables that satisfies an LDP with   good rate function $\ir$, and $(\tilde{\xi}_n)_{n\in\N}$ is another sequence that is exponentially equivalent to $(\xi_n)_{n\in\N}$, then $(\tilde{\xi}_n)_{n\in\N}$ satisfies an LDP with good rate function $\ir$.
\end{proposition}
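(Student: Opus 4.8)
The plan is to establish the two halves of the large deviation bounds for the laws $\tilde\mu_n\doteq\P\circ\tilde\xi_n^{-1}$ directly from those for $\mu_n\doteq\P\circ\xi_n^{-1}$, using exponential equivalence (Definition \ref{def-expequiv}) to discard the error terms. Since the candidate rate function is literally the same $\ir$, there is nothing to reprove about lower semicontinuity or compactness of level sets; goodness of $\ir$ is inherited from the hypothesis.

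For the upper bound, I would fix a closed set $F\subseteq\R$ and $\delta>0$, set $F^\delta\doteq\{x\in\R:\inf_{y\in F}|x-y|\le\delta\}$, and use the inclusion $\{\tilde\xi_n\in F\}\subseteq\{\xi_n\in F^\delta\}\cup\{|\xi_n-\tilde\xi_n|>\delta\}$ to obtain $\tilde\mu_n(F)\le\mu_n(F^\delta)+\P(|\xi_n-\tilde\xi_n|>\delta)$. Applying $\tfrac1n\log$, the elementary fact that the exponential decay rate of a sum of two nonnegative sequences is the maximum of the individual rates, the LDP upper bound for $(\xi_n)$, and the definition of exponential equivalence (which sends the second term to $-\infty$), yields $\limsup_n\tfrac1n\log\tilde\mu_n(F)\le-\inf_{F^\delta}\ir$. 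It then remains to let $\delta\downarrow0$ and show $\inf_{F^\delta}\ir\uparrow\inf_F\ir$. This is the one step that genuinely uses goodness, and is where I expect the only real work: $\alpha\doteq\lim_{\delta\downarrow0}\inf_{F^\delta}\ir$ exists by monotonicity and satisfies $\alpha\le\inf_F\ir$; if $\alpha=\infty$ we are done, and otherwise I would pick $x_\delta\in F^\delta$ with $\ir(x_\delta)\le\alpha+\delta$, note that for small $\delta$ these points lie in the compact level set $\{\ir\le\alpha+1\}$, extract a limit point $x$ (which lies in $F$ since $\inf_{y\in F}|x_\delta-y|\le\delta\to0$ and $F$ is closed), and conclude $\ir(x)\le\alpha$ by lower semicontinuity, hence $\inf_F\ir\le\alpha$.

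For the lower bound, I would fix an open set $G\subseteq\R$ and $x\in G$, noting there is nothing to prove when $\ir(x)=\infty$; otherwise choose $\delta>0$ with the open ball of radius $2\delta$ about $x$ contained in $G$. With $B$ the open ball of radius $\delta$ about $x$, the inclusion $\{\xi_n\in B\}\cap\{|\xi_n-\tilde\xi_n|\le\delta\}\subseteq\{\tilde\xi_n\in G\}$ gives $\tilde\mu_n(G)\ge\mu_n(B)-\P(|\xi_n-\tilde\xi_n|>\delta)$. For fixed $\epsilon>0$, the LDP lower bound for $(\xi_n)$ forces $\mu_n(B)\ge e^{-n(\ir(x)+\epsilon)}$ for all large $n$, while exponential equivalence forces $\P(|\xi_n-\tilde\xi_n|>\delta)\le e^{-n(\ir(x)+\epsilon+1)}$ for all large $n$; subtracting gives $\tilde\mu_n(G)\ge\tfrac12 e^{-n(\ir(x)+\epsilon)}$ eventually, hence $\liminf_n\tfrac1n\log\tilde\mu_n(G)\ge-(\ir(x)+\epsilon)$. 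Letting $\epsilon\downarrow0$ and then taking the supremum over $x\in G$ yields $\liminf_n\tfrac1n\log\tilde\mu_n(G)\ge-\inf_G\ir$. Combined with the upper bound and the hypothesis that $\ir$ is a good rate function, this is exactly the claimed LDP for $(\tilde\xi_n)$. The main obstacle throughout is the continuity of $\delta\mapsto\inf_{F^\delta}\ir$ at $\delta=0$, which is precisely what the compact-level-set assumption is there to supply.
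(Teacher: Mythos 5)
Your proof is correct, and it is essentially the standard argument for this result (Theorem 4.2.13 in the cited Dembo--Zeitouni reference), which the paper itself does not reprove but simply cites. Both halves are handled exactly as in that source: the closed $\delta$-fattening plus the max-of-rates lemma for the upper bound, the ball inclusion for the lower bound, and the compact-level-set/lower-semicontinuity argument to pass $\inf_{F^\delta}\ir\to\inf_F\ir$ as $\delta\downarrow 0$.
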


\begin{lemma}\label{lem-expon} Let $(\xi_n)_{n\in\N}$ be a sequence of random variables that satisfies an LDP with a good rate function $\ir$. Let $(a_n)_{n\in\N}$ be a deterministic sequence such that $a_n\ra 1$ as $n\rightarrow\infty$, and let $(\tilde{\xi}_n)_{n\in\N}$ be another sequence defined by:
\begin{equation*}
  \tilde{\xi}_n = a_n {\xi}_n, \quad n\in\mathbb{N}.
\end{equation*}
If $\ir$ is quasiconvex --- that is, if the set $\{x \in \R : \ir(x) \in (-\infty,c) \}$ is convex for all $c\in \R$ --- then $({\xi}_n)_{n\in\N}$ and $(\tilde{\xi}_n)_{n\in\N}$ are exponentially equivalent. 

\end{lemma}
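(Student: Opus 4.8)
The plan is to exploit the LDP satisfied by $(\xi_n)_{n\in\N}$ to control $\P(|\tilde\xi_n - \xi_n| > \delta) = \P(|a_n - 1|\,|\xi_n| > \delta)$ for fixed $\delta > 0$. First I would fix $\delta > 0$ and an arbitrary $M < \infty$. Since $a_n \to 1$, for all large $n$ we have $|a_n - 1| < \delta/M$, so that the event $\{|a_n-1|\,|\xi_n|>\delta\}$ is contained in $\{|\xi_n| > M\} = \{\xi_n \in (-\infty,-M) \cup (M,\infty)\}$. The set $F_M \doteq (-\infty,-M]\cup[M,\infty)$ is closed, so the LDP upper bound gives
\begin{equation*}
  \limsup_{n\ra\infty} \tfrac{1}{n}\log\P(|a_n-1|\,|\xi_n|>\delta) \le \limsup_{n\ra\infty}\tfrac1n\log\P(\xi_n\in F_M) \le -\inf_{x\in F_M}\ir(x).
\end{equation*}
It then remains to show that $\inf_{x\in F_M}\ir(x) \to \infty$ as $M\to\infty$; letting $M\to\infty$ afterwards yields the claimed $\limsup = -\infty$, hence exponential equivalence.

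The key step — and the only place quasiconvexity enters — is establishing $\inf_{|x|\ge M}\ir(x)\to\infty$. Since $\ir$ is a good rate function, its level sets $\{x:\ir(x)\le c\}$ are compact, hence bounded, so for each $c$ there is $M_c$ with $\{|x|\ge M_c\}\cap\{\ir\le c\}=\emptyset$, i.e. $\inf_{|x|\ge M_c}\ir(x) > c$; this already gives $\inf_{|x|\ge M}\ir(x)\to\infty$ \emph{without} quasiconvexity. So I would present the general-$\ir$ argument but note that goodness alone suffices for this lemma as literally stated; the quasiconvexity hypothesis is presumably included because it is the property actually invoked when combining this lemma with Proposition~\ref{prop-expeq} and the reduction \eqref{sigzet1} (where $\ir_\sigma$ is shown convex, hence quasiconvex, and where a direct argument via the good-rate-function property is what matters). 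If one wants to use quasiconvexity directly: a quasiconvex lower semicontinuous $\ir$ that is finite somewhere has the form where $\{\ir < c\}$ is an interval, and lower semicontinuity plus the requirement that $\ir$ be a rate function (not identically $0$ outside a bounded set, which would contradict the LDP being informative) forces $\ir(x)\to\infty$ as $|x|\to\infty$; but invoking goodness is cleaner.

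The main obstacle is essentially bookkeeping: handling the order of quantifiers ($\delta$ fixed, then $M$ chosen large, then $n$ large, then $M\to\infty$) correctly, and making sure the inclusion $\{|a_n-1|\,|\xi_n|>\delta\}\subseteq\{|\xi_n|\ge M\}$ holds for all sufficiently large $n$ uniformly in the relevant parameters. There is no real analytic difficulty — the substance is entirely in the compactness of the level sets of the good rate function $\ir$, which converts "$\xi_n$ large" into "exponentially unlikely at rate tending to $\infty$." I would write it as: fix $\delta>0$; for arbitrary $N\in\N$, choose $M$ with $\inf_{|x|\ge M}\ir(x) > N$ (possible by goodness), then choose $n_0$ with $|a_n-1|\le \delta/M$ for $n\ge n_0$, conclude $\limsup_n \tfrac1n\log\P(|\tilde\xi_n-\xi_n|>\delta)\le -N$, and let $N\to\infty$.
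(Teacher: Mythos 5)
Your argument is correct, and the overall strategy is the same as the paper's: contain the event $\{|\tilde{\xi}_n-\xi_n|>\delta\}$ in a tail event $\{|\xi_n|\ge M\}$ for large $n$, apply the LDP upper bound to the closed set $\{|x|\ge M\}$, and send the resulting rate to infinity. The one genuine difference is how that last step is handled, and here your observation is sharper than the paper's proof: the paper first invokes quasiconvexity (via the monotonicity of a quasiconvex function on either side of its global minimizer) to rewrite $\inf_{|x|\ge \delta/\epsilon}\ir(x)$ as $\min\bigl[\ir(\delta/\epsilon),\ir(-\delta/\epsilon)\bigr]$, and only then uses compactness of level sets to send these two values to $+\infty$. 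As you correctly note, goodness alone already gives $\inf_{|x|\ge M}\ir(x)\to\infty$ as $M\to\infty$, since each level set $\{\ir\le c\}$ is bounded; so the quasiconvexity hypothesis is not actually needed for the conclusion, and your version of the lemma is strictly more general (it applies to any good rate function). Your quantifier bookkeeping (fix $\delta$, then $N$, then $M$, then $n_0$, then let $N\to\infty$) is in order, so there is no gap.
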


\begin{proof}
For $\epsilon > 0$, let $N_\epsilon<\infty$ be such that for all $n\ge N_\epsilon$, we have $|1-a_n| < \epsilon$. For $n\ge N_\epsilon$ and any $\delta > 0$,
\begin{equation*}
|\tilde{\xi}_n - {\xi}_n| \ge \delta \quad \Leftrightarrow \quad |\xi_n| \cdot |1- a_n| \ge \delta \quad \Rightarrow \quad |\xi_n| \ge \tfrac{\delta}{\epsilon}.
\end{equation*}
Because $\ir$ is lower semicontinuous and has compact level sets, it achieves its global minimum at some (not necessarily unique) $\bar x \in \R$. Fix $\delta > 0$ and let $\epsilon > 0$ be small enough such that $|\bar x| < \tfrac{\delta}{\epsilon}$. Then, 
\begin{align*}
  \limsup_{n\ra\infty} \frac{1}{n} \log \P( |\tilde{\xi}_n - \xi_n| > \delta ) &\le  \limsup_{n\ra\infty} \frac{1}{n} \log \P( | \xi_n| \ge \tfrac{\delta}{\epsilon} ) \\
  &\le -\inf_{|x|\ge \delta/\epsilon} \ir(x)\\
  &= -\min\left[\ir(\tfrac{\delta}{\epsilon}) ,  \, \ir(-\tfrac{\delta}{\epsilon})\right].
\end{align*}
The second inequality follows from the LDP for $(\xi_n)_{n\in \N}$. The last equality follows from the fact that if a quasiconvex function has a global minimizer $\bar x$, then it is non-increasing for $x < \bar x$, and non-decreasing for $x > \bar x$ \cite[Lemma 1]{luenberger1968quasi}. Hence, since the rate function $\ir$ is quasiconvex and has a global minimizer $\bar x$ which satisfies $|\bar x| < \delta/\epsilon$, it follows  that if $x \ge \delta/\epsilon$ (resp., $x \le -\delta/\epsilon$), then we have $\ir(x) \ge \ir(\delta/\epsilon)$ (resp., $\ir(x) \ge \ir(-\delta/\epsilon)$). Lastly, take the limit as $\epsilon\ra 0$, and use the compactness of the level sets of  $\ir$ to conclude that $\ir(\frac{\delta}{\epsilon}) \ra +\infty$ and $\ir(-\frac{\delta}{\epsilon}) \ra +\infty$. This proves the required exponential equivalence.
\end{proof}

Fix $\zeta$ satisfying \eqref{zetdef}. Due to the strong law of large numbers, we have that for $\zeta$-a.e.\ $z\in \mathbb{A}$,
\begin{equation}\label{zslln}
  \frac{\sqrt{n}}{\|z^{(n)}\|_n} = \left(\frac{1}{n} \sum_{i=1}^n (z_i^{(n)})^2 \right)^{-1/2} \xrightarrow{n\ra\infty} 1.
\end{equation}
Thus, we are in a prime position to apply Lemma \ref{lem-expon}, which motivates the analysis of an LDP for $(\widehat W_z^{(n)})_{n\in\N}$.

\subsection{Proof of the LDP for $(W_\theta^{(n)})_{n\in\N}$} \label{ssec-pfqldp}

We aim to prove an LDP for the sequence $(\widehat{W}_z^{(n)})_{n\in \N}$; that is, an LDP for sums of independent but not identically distributed random variables (where the lack of identical distribution comes from the inhomogeneous weights $z_i^{(n)}$ within the sum). The G\"artner-Ellis theorem (recalled below) is well suited for such an LDP.

\begin{theorem}[G\"artner-Ellis]\label{th-ge}
Let $(\xi_n)_{n\in\N}$ be a sequence of $\R$-valued random variables. Suppose that the  \emph{limit log mgf} $\bar{\Lambda}:\R\ra[0,\infty)$ defined by
\begin{equation*}
  \bar\Lambda(t) \doteq \lim_{n\ra\infty} \frac{1}{n} \log \E[e^{tn\xi_n}]
\end{equation*}
is finite and differentiable at all $t\in \R$. Then $(\xi_n)_{n\in \N}$ satisfies an LDP with the  convex good rate function $\bar\Lambda^*$, the Legendre transform of $\bar\Lambda$.
\end{theorem}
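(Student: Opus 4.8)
The statement is the everywhere-finite, everywhere-differentiable special case of the G\"artner--Ellis theorem, and I would prove it by the classical combination of a Chebyshev/Markov upper bound with an exponential-change-of-measure lower bound; the fact that $\bar\Lambda$ is finite \emph{and} differentiable on all of $\R$ is what lets one sidestep the usual technicalities about effective domains, steepness, and essential smoothness. As a preliminary I would record the structural facts: each $\Lambda_n(t)\doteq\frac1n\log\E[e^{tn\xi_n}]$ is convex in $t$ by H\"older's inequality, so the pointwise limit $\bar\Lambda$ is convex; being finite on $\R$ it is continuous; being convex and differentiable everywhere it is in fact $C^1$. Consequently $\bar\Lambda^*$ is proper, convex, lower semicontinuous, and --- since $\bar\Lambda<\infty$ everywhere forces $\bar\Lambda^*$ to grow superlinearly --- it has compact level sets, i.e.\ it is a good rate function.

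For the \textbf{upper bound}, the exponential Markov inequality gives, for $x\in\R$ and $t\ge0$, $\frac1n\log\P(\xi_n\ge x)\le -tx+\Lambda_n(t)$; taking $\limsup_n$ and optimizing over $t\ge0$ yields $\limsup_n\frac1n\log\P(\xi_n\ge x)\le-\bar\Lambda^*(x)$ for $x$ to the right of a minimizer of $\bar\Lambda$, and by convexity this (together with the symmetric estimate for left half-lines) covers all $x$. Finiteness of $\bar\Lambda$ at two nonzero points yields exponential tightness of $(\xi_n)$; combining the half-line estimates over a finite subcover gives the upper bound on compact sets, and exponential tightness upgrades it to all closed sets.

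For the \textbf{lower bound} it suffices to show $\liminf_n\frac1n\log\P(|\xi_n-x|<\delta)\ge-\bar\Lambda^*(x)$ for every $x\in\R$ and $\delta>0$, and a standard convex-analysis reduction together with lower semicontinuity of $\bar\Lambda^*$ reduces this to $x$ in the range of $\bar\Lambda'$. Writing such an $x$ as $x=\bar\Lambda'(\eta)$, I would introduce the tilted laws $\tilde\mu_n(d\xi)\propto e^{n\eta\xi}\,\mu_n(d\xi)$, observe that their normalized cumulant generating functions have derivative $\bar\Lambda'(\eta)=x$ at the origin, and deduce from convexity that $\xi_n\to x$ in $\tilde\mu_n$-probability. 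Undoing the tilt, $\frac1n\log\P(|\xi_n-x|<\delta)\ge -\eta x-|\eta|\delta+\Lambda_n(\eta)+\frac1n\log\tilde\mu_n(|\xi_n-x|<\delta)$, and letting $n\to\infty$ (the last term vanishes, $\Lambda_n(\eta)\to\bar\Lambda(\eta)$) and then $\delta\to0$ gives $-(\eta x-\bar\Lambda(\eta))=-\bar\Lambda^*(x)$, the supremum defining $\bar\Lambda^*(x)$ being attained at $\eta$. The main obstacle is precisely this lower bound: proving the weak law $\xi_n\to x$ under the tilted measures from differentiability and convexity alone, and carrying out the reduction that shows it is enough to verify the bound on the range of $\bar\Lambda'$ --- points at or beyond the boundary of that range need a separate argument via lower semicontinuity of $\bar\Lambda^*$, which goes through here because $\mathrm{dom}\,\bar\Lambda=\R$ makes $\bar\Lambda$ automatically essentially smooth.
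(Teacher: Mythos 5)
The paper does not prove this statement: it is recalled as a classical result and the proof is delegated to the cited reference (den Hollander, \emph{Large Deviations}, Theorem V.6). Your sketch is precisely the standard textbook argument for the everywhere-finite, everywhere-differentiable case --- Chernoff/exponential-Markov upper bound plus exponential tightness for closed sets, and an exponential change of measure with a tilted weak law for the lower bound on exposed points --- which is essentially the proof given in that reference, and it is correct in outline, including your correct identification of where differentiability is actually used (concentration under the tilted laws and the reduction to the range of $\bar\Lambda'$). The only quibble is terminological: in the upper bound the half-line estimate with $t\ge 0$ recovers $\bar\Lambda^*(x)$ for $x$ to the right of $\bar\Lambda'(0)$, i.e.\ the minimizer of $\bar\Lambda^*$, not of $\bar\Lambda$; this does not affect the argument.
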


For a proof of Theorem \ref{th-ge}, we refer to \cite[Theorem V.6]{den2008large}, which also includes a more general version of the G\"artner-Ellis theorem that applies even if $\bar\Lambda$ is finite for only \emph{some} $t\in \R$ (under mild additional conditions).

The following lemma establishes a property of $\Psi$ which will be used in the application of the G\"artner-Ellis theorem.

\begin{lemma}\label{lem-psidiff}
Suppose that
\begin{equation}\label{firstexpon}
\forall \, t\in \R, \quad \int_\R |\Lambda(tu)| \, \nu(du) < \infty.
\end{equation}
Then, the function $\Psi$ of \eqref{psidef} is differentiable on $\R$.
\end{lemma}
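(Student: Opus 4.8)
The plan is to differentiate $\Psi$ under the integral sign in \eqref{psidef}. The first observation is that the hypothesis \eqref{firstexpon} forces $\Lambda$ to be finite on all of $\R$: for any fixed $t\neq 0$ the integral $\int_\R|\Lambda(tu)|\,\nu(du)$ can be finite only if $\Lambda(tu)<\infty$ for $\nu$-a.e.\ $u$, so the effective domain of the convex function $\Lambda$ is an interval whose complement in $\R$ has zero Lebesgue measure, and hence is all of $\R$. A finite convex function on $\R$ is continuous, locally Lipschitz, has finite one-sided derivatives $\Lambda_-'\le\Lambda_+'$ at every point, and is differentiable outside a countable set; moreover, since the moment generating function of $X_1$ is finite on all of $\R$, $\Lambda$ is in fact $C^\infty$, so it is differentiable at every point (in particular at $0$), although the weaker properties would already suffice below.

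Now fix $t_0\in\R$ and, for $0<|h|\le 1$, write the difference quotient
\[
\frac{\Psi(t_0+h)-\Psi(t_0)}{h}=\int_\R g_h(u)\,\nu(du),\qquad g_h(u)\doteq\frac{\Lambda((t_0+h)u)-\Lambda(t_0u)}{h}.
\]
For fixed $u\neq 0$, convexity makes the slope function $s\mapsto\bigl(\Lambda(t_0u+s)-\Lambda(t_0u)\bigr)/s$ nondecreasing on $\R\setminus\{0\}$, so for $|h|\le 1$ (hence $|hu|\le|u|$) one obtains
\[
|g_h(u)|=|u|\cdot\Bigl|\bigl(\Lambda(t_0u+hu)-\Lambda(t_0u)\bigr)/(hu)\Bigr|\le\bigl|\Lambda(t_0u+|u|)-\Lambda(t_0u)\bigr|+\bigl|\Lambda(t_0u-|u|)-\Lambda(t_0u)\bigr|;
\]
since $t_0u\pm|u|\in\{(t_0+1)u,(t_0-1)u\}$, this gives, for all $|h|\le 1$ (and trivially at $u=0$), the domination
\[
|g_h(u)|\le 2\bigl(|\Lambda(t_0u)|+|\Lambda((t_0+1)u)|+|\Lambda((t_0-1)u)|\bigr)\doteq G(u),
\]
and $G$ is $\nu$-integrable by \eqref{firstexpon} applied at the three points $t_0,t_0+1,t_0-1$. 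On the other hand $g_h(u)\to u\,\Lambda'(t_0u)$ as $h\to 0$ whenever $\Lambda$ is differentiable at $t_0u$, which holds for $\nu$-a.e.\ $u$ (for $t_0\neq 0$ the exceptional set of $u$ is a scaled copy of a countable set, hence $\nu$-null; for $t_0=0$ use differentiability of $\Lambda$ at $0$). The dominated convergence theorem then shows that $\Psi$ is differentiable at $t_0$, with $\Psi'(t_0)=\int_\R u\,\Lambda'(t_0u)\,\nu(du)$, and since $t_0\in\R$ was arbitrary this proves the lemma.

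The main obstacle is producing the $h$-uniform, $\nu$-integrable dominating function $G$: this is exactly where the convexity of $\Lambda$ is essential, since it converts the difference quotients (which are not obviously controlled in $u$) into plain differences of $\Lambda$ evaluated at the fixed, $u$-dependent points $(t_0\pm1)u$, to which \eqref{firstexpon} applies directly. The remaining ingredients — the reduction to finiteness of $\Lambda$ on $\R$, the pointwise convergence of the difference quotients, and the minor bookkeeping at $t_0=0$ — are routine.
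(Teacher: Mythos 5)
Your proof is correct and follows essentially the same route as the paper's: differentiation under the integral via dominated convergence, with convexity of $\Lambda$ used to dominate the difference quotients by $\nu$-integrable combinations of $|\Lambda(\cdot\,u)|$ at the shifted points $(t_0\pm 1)u$, to which \eqref{firstexpon} applies. The only (harmless) difference is that you bound the difference quotients directly by the monotone-slope inequality, whereas the paper first passes through $|\Lambda'(su)su|$ and then bounds that by differences of $\Lambda$.
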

\begin{proof}
For each $t\in \R$, differentiability of $\Psi$ at $t$ follows from the differentiability of $t\mapsto \Lambda(tu)$ for all $u\in \R$, and an application of the dominated convergence theorem with the dominating function
\begin{equation*}
  g_t(u) \doteq |\Lambda'((t-1)u)u| + |\Lambda'((t+1)u)u|, \quad u\in \R.
\end{equation*}
Indeed, fix $t\in \R$ and for each $\delta \in (-1,1)$ and $u\in\R$, define the difference quotient $R_{t,\delta}(u) \doteq [\Lambda((t+\delta)u) - \Lambda(tu)]/\delta$. Then,
\begin{equation*}
  |R_{t,\delta}(u)|  \le \sup \left\{ |\Lambda'((t+\alpha)u)u| : \alpha\in[-1,1]\right\} \le g(u),
\end{equation*}
where the last inequality uses the fact that $t\mapsto u\,\Lambda'(tu)$ is monotone. To show that $g_t$ is integrable, first note that the convexity of $\Lambda$ implies that for $u,s\in \R$,
\begin{equation*}
  \Lambda(su) - \Lambda(0) \le \Lambda'(su)\,su \le \Lambda(2su)-\Lambda(su),
\end{equation*}
and hence,
\begin{equation*}
  |\Lambda'(su)su| \le |\Lambda(0)| + |\Lambda(su)| + |\Lambda(2su)|.
\end{equation*}
Since, by the assumption \eqref{firstexpon}, for every $s\in \R$, the right-hand side is an integrable function of $u$, it follows that $g_t$ is also integrable for every $t\in \R$.

\end{proof}

\begin{proof}[Proof of Theorem \ref{th-qldp}]
Due to Lemma \ref{lem-equivsigzet} (in particular, its consequence, \eqref{sigzet1}), it suffices to prove a $\zeta$-a.e.\ LDP for the sequence $((\sqrt{n}/\|z\|_n)\widehat{W}_z^{(n)})_{n\in\N}$, where $\widehat{W}_z^{(n)}$ is defined as in \eqref{whatdef}. Due to Lemma \ref{lem-expon} and the limit \eqref{zslln}, it suffices to prove a $\zeta$-a.e.\ LDP for $(\widehat{W}_z^{(n)})_{n\in\N}$. To this end, we consider the G\"artner-Ellis limit log mgf for the sequence $(\widehat{W}_z^{(n)})_{n\in\N}$. For every $n\in \N$ and $t\in \R$, we have due to the independence of $X_i$, $i=1,\cdots, n$,
\begin{equation}\label{gepres}
\Lambda_{n,z}(t)\doteq  \frac{1}{n} \log \E\left[ \exp\left(tn\widehat{W}_z^{(n)}\right) \right] =  \frac{1}{n} \log \prod_{i=1}^n \E\left[ \exp\left(t X_iz_i^{(n)}\right) \right]  =  \frac{1}{n} \sum_{i=1}^n \Lambda(tz_i^{(n)}). 
\end{equation}
We first claim that for $\zeta$-a.e.\ $z\in \mathbb{A}$,  the G\"artner-Ellis limit log mgf, the limit of \eqref{gepres}, satisfies, for each $t\in \R$,
\begin{equation}\label{gelim}
 \lim_{n\ra\infty} \Lambda_{n,z}(t) = \int_\R \Lambda(tu) \nu(du) = \Psi(t),
\end{equation}
with $\Psi$ as defined in \eqref{psidef}.

We proceed by proving the following modified claim (obtained by interchanging the quantifiers in our original claim):  for each $t\in \R$, for $\zeta$-a.e. $z\in \mathbb{A}$, the expression \eqref{gelim} holds. Note that if $z$ were an i.i.d.\ sequence instead of a triangular array, our modified claim would follow from the usual strong law of large numbers. However, the strong LLN does not necessarily extend to empirical means of rows of i.i.d.\ random variables in a triangular array (see, e.g., \cite[Example 5.41]{romano1986counterexamples}). On the other hand, if the common distribution of the i.i.d.\ elements (in our case,  each of the random variables $\Lambda(tz_i^{(n)})$, $i=1,\dots,n$, $n\in \N$) has finite fourth moment, then the strong LLN follows from a standard weak LLN and Borel-Cantelli argument \cite[p.113, (i)]{romano1986counterexamples}.  Due to our assumption \eqref{strongexpon}, it follows that for all $t\in \R$, for $\zeta$-a.e. $z\in \mathbb{A}$, the limit \eqref{gelim} holds.

Next, we aim to interchange the quantifiers to establish the original claim. Note that for each $n\in\N$,  $\Lambda_{n,z}$ of \eqref{gepres} is a convex function (since it is the sum of convex functions).  Now, let $T\subset \R$ be countable and dense. Then, it follows from countable additivity that for $\zeta$-a.e. $z\in \mathbb{A}$, the convex functions $\Lambda_{n,z}(t)$ converge pointwise as $n\ra\infty$ to $\Psi(t)$, for all $t$ in the dense subset $T\subset\R$. Hence,  the convex analytic considerations of \cite[Theorem 10.8]{rockafellar1970convex} imply that the pointwise convergence of $\Lambda_{n,z}(t)$ to $\Psi(t)$ holds for all $t\in\R$. That is, for $\zeta$-a.e.\ $z\in \mathbb{A}$, for all $t\in \R$, the limit \eqref{gelim} holds, proving our original claim.

Since \eqref{strongexpon} holds, $\Psi(t)<\infty$ for all $t\in \R$ and, because \eqref{firstexpon} follows trivially from \eqref{strongexpon}, Lemma \ref{lem-psidiff} implies that $\Psi$ is differentiable on $\R$. Therefore, by the G\"artner-Ellis Theorem (Theorem \ref{th-ge}), for $\zeta$-a.e.\ $z\in \mathbb{A}$,  the sequence $(\widehat{W}_z^{(n)})_{n\in\N}$ satisfies an LDP with good rate function $\Psi^*$.
\end{proof}

\section{Atypicality}\label{sec-atyp}

In this section, we compare the rate function $\ir_\sigma$ with the Cram\'er rate function $\ir_\iota$. We first use Jensen's inequality to compare the associated log mgfs $\Psi$ and $\Lambda$.

\begin{lemma}\label{lem-mgfineq}
Assume \eqref{symm}, and let $\Lambda$ and $\Psi$ be defined as in \eqref{weakexpon} and \eqref{psidef}, respectively.
\begin{enumerate}[label=\emph{\alph*.}]
\item If $\Lambda \circ \sqrt{\cdot}$ is concave on $\R_+$, then $\Psi(t) \le \Lambda(t)$ for all $t\in \R$.
\item If $\Lambda \circ \sqrt{\cdot}$ is convex on $\R_+$, then  $\Psi(t) \ge \Lambda(t)$  for all $t\in \R$. 
\item If $\Lambda \circ \sqrt{\cdot}$ is concave or convex, but not linear, on $\R_+$, then $\Lambda(t) = \Psi(t)$ if and only if $t=0$.
\end{enumerate}
\end{lemma}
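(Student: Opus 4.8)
The plan is to exploit the symmetry assumption \eqref{symm} to express both $\Lambda$ and $\Psi$ in terms of the single function $g \doteq \Lambda\circ\sqrt{\cdot}$, and then read off all three parts from Jensen's inequality. First I would observe that \eqref{symm} gives $\Lambda(tu) = \Lambda(|t|\,|u|) = g(t^2u^2)$ for all $t,u\in\R$, so that \eqref{psidef} becomes $\Psi(t) = \int_\R g(t^2u^2)\,\nu(du) = \E\bigl[g(t^2U^2)\bigr]$ for $U\sim\nu$, while at the same time $g(t^2) = \Lambda(|t|) = \Lambda(t)$. Since $\E[t^2U^2] = t^2$, parts a.\ and b.\ are then immediate: if $g$ is concave on $\R_+$ (the set in which $t^2U^2$ takes values), Jensen's inequality gives $\Psi(t) = \E[g(t^2U^2)] \le g(\E[t^2U^2]) = g(t^2) = \Lambda(t)$, and the reversed inequality holds when $g$ is convex. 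The integrability required to apply Jensen is exactly \eqref{firstexpon} (which is needed for $\Psi$ to be finite in the first place, and which follows from \eqref{strongexpon}).

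For part c., the case $t=0$ is trivial, since $\Psi(0) = \Lambda(0) = 0$. For $t\ne 0$, I would analyze the equality case in Jensen's inequality, using that the law of $t^2U^2$ has full support on $\R_+$ and is non-degenerate. Consider the convex case (the concave case being entirely parallel, with supporting lines from below replaced by supporting lines from above). Let $\ell$ be an affine function supporting the convex function $g$ at the point $t^2$, i.e.\ $g\ge\ell$ on $\R_+$ with $g(t^2)=\ell(t^2)$; such an $\ell$ exists because $g$, being the composition of the finite convex function $\Lambda$ with $\sqrt{\cdot}$, is convex and continuous on $\R_+$. Affinity of $\ell$ gives $\E[\ell(t^2U^2)] = \ell(t^2) = g(t^2) = \Lambda(t)$. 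If $\Psi(t)=\Lambda(t)$, then $\E\bigl[g(t^2U^2)-\ell(t^2U^2)\bigr]=0$ with nonnegative integrand, forcing $g(t^2U^2)=\ell(t^2U^2)$ almost surely; full support of $t^2U^2$ on $\R_+$ together with continuity of $g$ and $\ell$ then yields $g=\ell$ on all of $\R_+$, contradicting the hypothesis that $\Lambda\circ\sqrt{\cdot}$ is not linear. Hence $\Psi(t)\ne\Lambda(t)$ for every $t\ne 0$, which combined with parts a.\ and b.\ completes part c.

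The main obstacle is the equality analysis in part c.: one must upgrade ``$\Lambda\circ\sqrt{\cdot}$ is not linear on $\R_+$'' into a genuine strict inequality in Jensen, and the point that makes this work is that the random argument $t^2U^2$ (for $t\ne0$) exhausts all of $\R_+$, so equality cannot be localized to a proper subinterval on which $g$ might happen to be affine. The supporting-line comparison above is the clean way to make this rigorous; everything else is routine substitution and a direct appeal to Jensen's inequality.
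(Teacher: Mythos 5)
Your argument is correct and is essentially the paper's proof: the symmetry condition is used to rewrite $\Lambda(tU)=\bigl(\Lambda\circ\sqrt{\cdot}\bigr)(t^2U^2)$ and parts a.\ and b.\ follow from Jensen's inequality applied to $\Lambda\circ\sqrt{\cdot}$, with part c.\ coming from the equality case in Jensen (the paper simply cites the standard fact that equality forces either affineness of the function or a.s.\ constancy of $t^2U^2$, whereas you prove it via supporting lines and the full support of $t^2U^2$ on $\R_+$ --- a worthwhile extra level of rigor, since ``not linear on $\R_+$'' must indeed be confronted with the support of the random argument). The only blemish is the phrase suggesting that convexity of $\Lambda\circ\sqrt{\cdot}$ follows from convexity of $\Lambda$; it is of course the hypothesis of parts b.\ and c., and all you need from it is the existence of a supporting line at the interior point $t^2$.
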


\begin{proof}
We begin with part a. Let $\nu$ be the standard Gaussian distribution, and let $Z\sim \nu$ be a standard Gaussian random variable. Then, for all $t\in \R$, we have
\begin{align*}
\Psi(t) &= \E[\Lambda(tZ)]\\
\text{\small(symmetry)}\quad   &= \E\left[\Lambda\left((t^2Z^2)^{1/2}\right)\right] \\
\text{\small(Jensen)} \quad   &\le \Lambda\left(\E[t^2Z^2]^{1/2}\right) \\
   &= \Lambda(t).
\end{align*}
Similar calculations can be used to establish  part b. As for part c., recall that in Jensen's inequality, equality holds if and only if either: (i) $\Lambda \circ \sqrt{\cdot}$ is linear; or (ii) the underlying random variable is almost surely constant. Note that (i) is not the case by assumption. As for (ii), this holds if and only if $t^2Z^2$ is almost surely constant, which is the case if and only if $t=0$.
\end{proof}

Before we prove the theorem, we recall some basic facts about the log mgf of $X_1\sim \gamma$. Let the \emph{domain} of a function $f:\R\rightarrow\R$ be the set $D_f\doteq \{x\in \R : f(x) < \infty\}$. For a set $D\subset \R$, let $D^\circ$ denote the interior of $D$.

\begin{lemma}\label{lem-lamstan}
Let $\Lambda(t) = \log \E[e^{tX_1}]$ be the log mgf of some random variable $X_1$. Then,
\begin{enumerate}
\item $\Lambda$ is lower semicontinuous;
\item $\Lambda$ is smooth in $D_{\Lambda}^\circ$;
\item $\Lambda$ is convex.
\end{enumerate}
Furthermore, if $X_1$ is non-degenerate (i.e., not a.s.\ constant), then
\begin{enumerate}[resume]
\item $\Lambda$ is strictly convex in $D_{\Lambda}^\circ$;
\item $\Lambda^*$ is differentiable in $D_{\Lambda^*}^\circ$;\item for $x\in D_{\Lambda^*}^\circ$, the maximum in the definition of the Legendre transform is uniquely attained --- that is, the following  quantity is well defined:
\begin{equation}
  t_x \doteq \arg\max\{tx - \Lambda(t)\}. \label{txmax}
\end{equation}
\end{enumerate}
\end{lemma}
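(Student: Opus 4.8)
The plan is to handle the three unconditional assertions by direct integration arguments, and then to derive the three conditional ones (valid when $X_1$ is non-degenerate) from strict convexity together with the standard conjugacy dictionary for Legendre transforms. For item~1, Fatou's lemma does it: if $t_n\to t$ then $e^{t_nX_1}\to e^{tX_1}$ almost surely, so $\E[e^{tX_1}]\le\liminf_n\E[e^{t_nX_1}]$, i.e.\ $\Lambda(t)\le\liminf_n\Lambda(t_n)$. Item~3 is H\"older's inequality with conjugate exponents $1/\lambda$ and $1/(1-\lambda)$: for $\lambda\in[0,1]$ one has $\E[e^{(\lambda s+(1-\lambda)t)X_1}]\le\E[e^{sX_1}]^\lambda\E[e^{tX_1}]^{1-\lambda}$, and one takes logarithms. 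For item~2, fix $t\in D_\Lambda^\circ$ and pick $a<t<b$ with $a,b\in D_\Lambda$; for every $s\in(a,b)$ and every $k\in\N$ one has the domination $|x|^ke^{sx}\le C_k(e^{ax}+e^{bx})$, which is $\gamma$-integrable, so the mgf $M(t)\doteq\E[e^{tX_1}]$ may be differentiated under the integral sign arbitrarily often, giving $M\in C^\infty(D_\Lambda^\circ)$ with $M^{(k)}(t)=\E[X_1^ke^{tX_1}]$; since $M>0$ everywhere, $\Lambda=\log M$ is smooth on $D_\Lambda^\circ$ (alternatively one can quote \cite[Lemma~2.2.5]{DemZeibook}).

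For item~4 I would differentiate twice to get $\Lambda''(t)=(M''(t)M(t)-M'(t)^2)/M(t)^2$ and note that by Cauchy--Schwarz $M'(t)^2=\E[X_1e^{tX_1}]^2\le\E[X_1^2e^{tX_1}]\,\E[e^{tX_1}]=M''(t)M(t)$, with equality precisely when $X_1$ is a.s.\ constant; equivalently, $\Lambda''(t)$ is the variance of $X_1$ under the exponentially tilted law $\mu_t(dx)\propto e^{tx}\,\gamma(dx)$, which is strictly positive exactly when $X_1$ is non-degenerate. Hence $\Lambda$ is strictly convex on $D_\Lambda^\circ$.

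For items~5 and~6 I would invoke convex duality. Being closed (item~1), proper, convex, and strictly convex on the interior of its domain, $\Lambda$ is essentially strictly convex, so its conjugate $\Lambda^*$ is essentially smooth by \cite[Theorem~26.3]{rockafellar1970convex}; in particular $\Lambda^*$ is differentiable throughout $D_{\Lambda^*}^\circ$, which is item~5. For item~6, fix $x\in D_{\Lambda^*}^\circ$; then $\partial\Lambda^*(x)$ is nonempty \cite[Theorem~23.4]{rockafellar1970convex} and, by item~5, equals $\{(\Lambda^*)'(x)\}$. By the subgradient inversion for conjugate functions, $t\in\partial\Lambda^*(x)$ iff $x\in\partial\Lambda(t)$ iff $\Lambda(t)+\Lambda^*(x)=tx$, and the last equality says precisely that $t$ attains the supremum in $\Lambda^*(x)=\sup_t\{tx-\Lambda(t)\}$ (and forces $\Lambda(t)<\infty$). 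Thus the maximizer exists and is the unique point $t_x=(\Lambda^*)'(x)$.

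The step I expect to need the most care is the convex-analytic one for items~5--6: one must verify that $\Lambda$ genuinely meets Rockafellar's definition of essential strict convexity (in particular handling boundary points of $D_\Lambda$ at which $\Lambda$ is finite, and the degenerate cases in which $D_\Lambda^\circ$ or $D_{\Lambda^*}^\circ$ is empty), and one must confirm that the supremum defining $\Lambda^*(x)$ is actually attained --- at an interior point of $D_\Lambda$ --- so that strict concavity of $t\mapsto tx-\Lambda(t)$ may legitimately be invoked. The remaining steps are routine.
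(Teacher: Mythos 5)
Your proof is correct and follows essentially the same route as the paper's own sketch: Fatou's lemma for lower semicontinuity, differentiation under the integral for smoothness, H\"older (your Cauchy--Schwarz/tilted-variance step is just the $p=q=2$ case) for convexity and strict convexity, and Rockafellar's conjugacy results (Theorems 26.3 and 26.5/23.4) for the differentiability of $\Lambda^*$ and the unique attainment of the maximizer $t_x=(\Lambda^*)'(x)$. The extra care you flag about essential strict convexity and attainment is sensible but resolves routinely, exactly along the lines you indicate.
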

\begin{proof}
These are mostly standard, but we provide sketches of the proofs. For 1., lower semicontinuity follows from Fatou's lemma. For 2., smoothness follows from interchanging differentiation and expectation. Convexity in 3.\ and strict convexity in 4.\ follow from H\"older's inequality. As for 5., it is classical that if a function is lower semicontinuous and strictly convex in the interior of its domain, then its Legendre transform is differentiable in the interior of its domain (see \cite[Theorem 26.3]{rockafellar1970convex}). Lastly, for 6., it is also classical that for $x\in D_{\Lambda^*}^\circ$,  we have $t_x = (\Lambda^*)'(x)$ (see \cite[Theorem 26.5]{rockafellar1970convex}).
\end{proof}

\begin{proof}[Proof of Theorem \ref{th-atyp}]
Assume without loss of generality that $X_1$ is non-degenerate. If it were degenerate, then due to the symmetry condition \eqref{symm}, the law of $X_1$ must be that $\gamma = \delta_0$, in which case $\Lambda = \Psi = 0$. Therefore, $\ir_\sigma$ and $\ir_\iota$ are both  equal to the characteristic function at 0 (which is equal to  0 at $w=0$ and $+\infty$ for all other $w$), and the result is trivial.

Suppose $\Lambda \circ \sqrt{\cdot}$ is concave (the convex case is similar, but with inequalities reversed). Due to Lemma \ref{lem-mgfineq}, we have $\Psi(t) \le \Lambda(t)$ for all $t\in \R$, which due to the definition of the Legendre transform implies that $\ir_\sigma(w) = \Psi^*(w) \ge \Lambda^*(w)= \ir_\iota(w)$ for all $w\in \R$, thus proving a.\ (and b.\ for the convex case).

Further assume the stronger condition of c., that $\Lambda \circ \sqrt{\cdot}$ is concave but not linear.  Then, for $w\in\R$ such that $\Lambda^*(w) < \infty$, let $t_w$ be as in \eqref{txmax}, which is well defined due to the non-degeneracy condition of Lemma \ref{lem-lamstan}. Then,
\begin{align*}
\ir_\sigma(w) = \Psi^*(w) &\ge t_ww - \Psi(t_w) \\
   &\ge t_ww - \Lambda(t_w) \\
   &= \Lambda^*(w) = \ir_\iota(w).
\end{align*}
Due to Lemma \ref{lem-mgfineq}, the second inequality  above is an equality if and only if $t_w = 0$, which occurs if and only if $(\Lambda^*)'(w) = 0$. Note that $\Lambda$ is symmetric, so $\Lambda^*$ is also symmetric (by definition of the Legendre transform). Moreover, the smoothness of $\Lambda$  (see Lemma \ref{lem-lamstan}), implies the strict convexity of $\Lambda^*$  within its domain (see \cite[Theorem 26.3]{rockafellar1970convex}). Thus, $(\Lambda^*)'(w) = 0$ if and only if $w=0$. This yields the claim of part c.
\end{proof}

\begin{remark}
In this paper, we address the ``atypical" nature of the directions
$\onen = (1,1,\dots,1)$ associated with Cram\'er's theorem for large
deviations of product measures. But in fact, the notions of
atypicality and universal rate function extend beyond the product
case. In particular, the companion paper \cite{gkr2} establishes LDPs
for random projections of random vectors distributed according to the
uniform measure on $\ell^p$ balls, again with a rate function that
coincides for $\sigma$-a.e.\ sequence of directions, and the 
sequence of directions $\onen = (1,1,\dots,1)$, $n \in \mathbb{N}$,
can be shown to be atypical in that setting as well. 
\end{remark}

\section*{Acknowledgements}
We would like to thank an anonymous referee for helpful feedback on the exposition.


\bigskip
\bibliography{ldpbib}
\bibliographystyle{imsart-number}

\end{document}